\def\thm@space@setup{%
  \thm@preskip=\parskip \thm@postskip=0pt
}
\theoremstyle{plain}
\newtheorem{lemma}{Lemma}
\newtheorem{proposition}{Proposition}
\newtheorem{theorem}{Theorem}
\newtheorem{corollary}{Corollary}
\theoremstyle{definition}
\newtheorem{definition}{Definition}
\newtheorem{remark}{Remark}
\newtheorem{example}{Example}
\newtheorem{algorithm}{Algorithm}
    \newcommand{\Mend}{\hfill \ensuremath{\vartriangleleft}}
    \newcommand{\Md}{\textemdash}
    \newcommand{\Msum}[2]{\ensuremath{\overset{#2}{\underset{#1}{\sum}}}}
    \newcommand{\Mbigfloor}[1]{{\left\lfloor #1 \right\rfloor}}
    \newcommand{\Mceil}[1]{{\lceil #1 \rceil}}
    \newcommand{\Mmod}[1]{\langle #1\rangle} 
    \newcommand{\Mset}[2]{\ensuremath{\{ #1 | #2 \}}}
    \newcommand{\Mdashfun}[5]{\ensuremath{#1: #2 \dashrightarrow #3,\quad #4 \mapsto #5 }}
    \newcommand{\Mand}{{\textrm{~and~}}}
    \newcommand{\Miff}{if and only if }
    \newcommand{\Mst}{such that }
    \newcommand{\Mwlog}{without loss of generality }
    \newcommand{\Mwrt}{with respect to }
    \newcommand{\Marrow}[3]{\ensuremath{#1\stackrel{#2}{\longrightarrow}#3}}
    \newcommand{\Mrow}[3]{\ensuremath{#1\colon #2\longrightarrow #3}}
    \newcommand{\Mdashrow}[3]{\ensuremath{#1\colon #2\dashrightarrow #3}}    
    \newcommand{\Mdef}[1]{\textit{#1}\index{#1}}
    \newcommand{\MdefAttr}[2]{\textit{#1}\index{#2!#1}\index{#1}}
    \newcommand{\McalO}{{\mathcal{O}}}
    \newcommand{\MbbC}{{\mathbb{C}}}\newcommand{\MbbP}{{\mathbb{P}}}\newcommand{\MbbQ}{{\mathbb{Q}}}\newcommand{\MbbR}{{\mathbb{R}}}\newcommand{\MbbZ}{{\mathbb{Z}}}
    \newcommand{\LEM}[1]{Lemma~\ref{lem:#1}}
    \newcommand{\PRP}[1]{Proposition~\ref{prp:#1}}
    \newcommand{\COR}[1]{{Corollary~\ref{cor:#1}}}
    \newcommand{\THM}[1]{{Theorem~\ref{thm:#1}}}
    \newcommand{\DEF}[1]{{Definition~\ref{def:#1}}}
    \newcommand{\RMK}[1]{{Remark~\ref{rmk:#1}}}
    \newcommand{\EQN}[1]{{(\ref{eqn:#1})}}
    \newcommand{\SEC}[1]{{\textsection\ref{sec:#1}}}
    \newcommand{\ALG}[1]{{Algorithm~\ref{alg:#1}}}
    \newcommand{\EXM}[1]{{Example~\ref{exm:#1}}}
    \newcommand{\Mmclaim}[1]{\item[\textbf{#1)}]}
    \newcommand{\Mrefmclaim}[1]{#1)}
    \newcommand{\Mclaim}[1]{\textit{Claim #1:}}
    \newcommand{\Mrefclaim}[1]{claim #1}
\title{
A degree bound for families of rational curves on surfaces
}
\author{Niels Lubbes}
\date{\today}
\begin{document}

\maketitle

\begin{abstract}
We give an upper bound for the degree of rational curves
in a family that covers a given birationally ruled surface in projective space.
The upper bound is stated in terms of the degree, sectional genus
and arithmetic genus of the surface.
We introduce an algorithm for constructing examples
where the upper bound is tight.
As an application of our methods we improve an inequality on lattice polygons.
\end{abstract}


\tableofcontents

\section{Introduction}

A \Mdef{parametrization} of a rational surface $S\subset \MbbP^n$
is a birational map
\[
\Mdashfun{f}{\MbbC^2}{S\subset \MbbP^n}{(s,t)}{(f_0(s, t) : \ldots : f_n(s, t))}.
\]
The \Mdef{parametric degree} of $S$ is defined as the minimum of
the set of integers of the form $\max\Mset{\deg f_i}{0\leq i \leq n}$ 
for some birational map $\Mdashrow{f}{\MbbC^2}{S}$.

An upper bound for the parametric degree over an algebraically closed field of characteristic 0
is given in \citep[Theorem~9]{sch2} in terms of the sectional genus and degree of $S$.
In \citep[Theorem~20]{sch3} bounds for the parametric degree over perfect fields
are expressed in terms of the level and keel (\DEF{levelandkeel}).
The upper bound in \citep[Theorem~9]{sch2} can be interpreted as an upper bound on the level.
The analysis of \cite{sch3} applied to toric surfaces
led to new inequalities for invariants of lattice polygons in \cite{sch4}.
In \citep[Section~2.7]{cst1} it is conjectured that the inequality
can be improved by taking into account the number of vertices.
In \citep[Theorem~5]{sch5} these inequalities for lattice polygons are
translated to inequalities of rational surfaces
and in \citep[Section~4]{sch5}
the conjecture of \cite{cst1} is restated in the context of rational surfaces.

In this paper we generalize the level and keel for
rational surfaces in \citep[Section~3]{sch3} to birationally ruled surfaces
(this generalization is also posed as an open question in \citep[Section~1]{sch5}).
Instead of the parametric degree we now consider the minimal family degree (defined in \SEC{family}).
\THM{bound} gives an upper bound for the level of a birationally ruled surface
$S\subset\MbbP^n$
in terms of the sectional genus, degree and arithmetic genus.
As \COR{family} we obtain an upper bound for the minimal family degree.
If $S$ is rational, then our upper bound for the level coincides
with the upper bound for the level in \citep[Lemma~8]{sch2}.
However, in order to generalize this bound we give an alternative proof.
This proof enables us to make a case distinction on the invariants of $S$,
which improves the upper bound for the level.
Moreover, these methods enables us to prove the correctness
of \ALG{construct} that outputs examples where our upper bound is attained.
Thus we show that our upper bound for the level is tight in a combinatorial sense.
This algorithm is simple but has a non-trivial correctness proof.
These methods generalize the inequality \citep[Theorem~5]{sch5} to birationally ruled surfaces.
If we restrict our generalized
inequality to toric surfaces, we obtain an improved inequality
involving lattice polygons as conjectured in \citep[Section~2.7]{cst1},\citep[Section~4]{sch5}.
In light of the historical context,
one might ask whether this inequality
can be improved using the language of lattice geometry.

I would like to end this introduction with some additional remarks
on the degree of minimal parametrizations.
Let $s(f):=\max\Mset{\deg_sf_i}{0\leq i \leq n}$, $t(f):=\max\Mset{\deg_tf_i}{0\leq i \leq n}$
and we assume \Mwlog that $t(f)\geq s(f)$.
The \Mdef{parametric bi-degree} of $S$ is defined as
the minimum of $(s(f),t(f))$ among all birational maps  
$\Mdashrow{f}{\MbbC^2}{S}$
\Mwrt the lexicographic order
on ordered pairs of integers.
If $S\subset\MbbP^n$ attains at least two minimal families, then 
the parametric bi-degree of $S$ equals $(v(S),v(S))$
where $v(S)$ is minimal family degree \citep[Theorem~17]{nls-f3}.
Thus in this case our upper bound for the minimal family degree translates into
an upper bound of the parametric bi-degree.
If $S$ carries only one minimal family, then
an upper bound for the parametric bi-degree is still open.
In this case we also have to incorporate the keel in addition to
the sectional genus, degree and arithmetic genus of $S$.

\section{Intersection theory}
\label{sec:theory}

We recall some intersection theory and this section can be omitted by the expert.

The \Mdef{Neron-Severi group} $N(X)$ of a non-singular projective surface
$X$ can be defined as the group of divisors modulo numerical equivalence.
This group admits a bilinear intersection product
\[
\Mrow{\cdot}{N(X)\times N(X)}{\MbbZ}.
\]
The  \Mdef{Picard number} of $X$ is defined as the rank of $N(X)$.
The \Mdef{Neron-Severi theorem} states that the Picard number is finite.
For proofs in the next section we implicitly also consider $N(X)\otimes\MbbR$.
Moreover, we switch between the linear and numerical equivalence class of a divisor where needed.

The class of an \Mdef{exceptional curve} $E$ in $N(X)$ is characterized by
\[
E^2=E\cdot K=-1,
\]
where $K$ is the canonical divisor class of $X$.
\Mdef{Castelnuovo's contractibility criterion} states that for all
exceptional curves $E$ there exists a contraction map
\[
\Mrow{f}{X}{Y},
\]
\Mst $f(E)=p$ with $p$ a smooth point and
$
\Marrow{(X\setminus E)}{}{(Y\setminus p)}
$
is an isomorphism via $f$.
The assignment of Neron-Severi groups is functorial \Mst
\[
\Mrow{f^*}{N(Y)}{N(X)}.
\]
The groups are related by
\[
N(X)\cong N(Y)\oplus\MbbZ\Mmod{E},
\]
and thus the Picard number drops for each contracted curve.
The formula for pullback of the canonical class is
\[
f^*(K)=K_Y-E.
\]
Suppose that $D\subset Y$ is a divisor and let $\tilde{D}$ be the strict transform of $D$ along 
$\Mrow{f}{X}{Y}$. In this case
\[
f^*[D]=[\tilde{D}]+mE,
\]
where $[D]\in N(Y)$, $[\tilde{D}]\in N(X)$
and $m$ is the order of $D$ at $p$.
For the intersection product we
have the \Mdef{projection formula}
\[
f^*(C)\cdot A=C\cdot f_*(A),
\]
and compatibility with the pullback
\[
f^*(A)\cdot f^*(B)=A\cdot B,
\]
for all $A,B \in N(X)$ and $C\in N(Y)$.

The \Mdef{Hodge index theorem} states that if $A^2>0$ and $A\cdot B=0$, then $B^2<0$ or $B=0$ for all $A,B\in N(X)$.

The \Mdef{adjunction formula} implies that $A^2+A\cdot K\geq -2$ for all $A\in N(X)$.
If $D$ is a divisor isomorphic to $\MbbP^1$, then 
$[D]^2+[D]\cdot K = -2$ with $[D]\in N(X)$.

We denote by $p_a(X)$ the \Mdef{arithmetic genus} of $X$ and it is a birational invariant.
If $X$ is a ruled surface then $p_a(X)$ equals the negative of the geometric genus of its base curve.

The \Mdef{Riemann-Roch theorem} states that
\[
h^0(D)-h^1(D)+h^2(D)=\frac{D\cdot (D-K)}{2}+p_a(X)+1,
\]
for a divisor class $D$ (up to linear equivalence) with associated sheaf $\McalO(D)$.
Here $h^i(D)$ denotes the dimension of the i-th sheaf cohomology $\dim H^i(X, \McalO(D) )$.
\Mdef{Serre duality} states that $h^2(D)=h^0(K-D)$.

\section{Adjunction}
\label{sec:adjunction}

Adjunction works over any field.

We call a divisor class $D$ of a surface \Mdef{efficient} \Miff $D\cdot E>0$ for all exceptional curves $E$.

We define a \Mdef{ruled pair} as a pair $(X,D)$
where $X$ is a non-singular birationally ruled surface and $D$ is a nef and efficient divisor class of $X$.

If $D$ is effective, then the \MdefAttr{polarized model}{ruled pair} of $(X,D)$ is defined as
$\overline{\varphi_D(X)} \subset \MbbP^{h^0(D)-1}$ where $\varphi_D$ is the map associated
to the global sections $H^0(X,\McalO(D))$.

If $(X,D)$ is a ruled pair, then 
$X$ has Kodaira dimension $-\infty$
and thus the canonical divisor class $K$ of $X$ is not nef \citep[Section~1.2]{mat1}.
The \Mdef{nef threshold} of $D$ is defined as
\[
t(D):=\textrm{sup}\Mset{~q\in\MbbR~}{~ D+qK \text{ is nef}~}.
\]

We call a ruled pair $(X,D)$ \MdefAttr{non-minimal}{ruled pair}
if $D$ is big and either
\begin{enumerate}[topsep=0pt, label=(\roman*)]
\item $t(D)=1$ and $D\neq -K$, or 
\item $t(D)>1$.
\end{enumerate}
We call a ruled pair $(X,D)$ \MdefAttr{minimal}{ruled pair}
if either
\begin{enumerate}[topsep=0pt, label=(\roman*)]
\item $t(D)=1$ and $D= -K$, or
\item $t(D)<1$.
\end{enumerate}
An \Mdef{adjoint relation} is a relation between two ruled pairs
\[
\Marrow{(X,D)}{\mu}{(X',D')}:=\bigl(\mu(X),\mu_*(D+K)\bigr),
\]
\Mst
$(X,D)$ is a non-minimal ruled pair and
$\Mrow{\mu}{X}{X'}$ is a
birational morphism that contracts all exceptional curves $E$ \Mst $(D+K)\cdot E=0$.

\begin{lemma}
\textbf{(adjoint relation)}
\label{lem:D}
\\
Let $\Marrow{(X,D)}{\mu}{(X',D')}$ be an adjoint relation.
\begin{itemize}[topsep=0pt]

\Mmclaim{a} $\mu^*D'=D+K$ and $D'^2=(D+K)^2$.

\Mmclaim{b} If $D'^2>0$, then $\Mrow{\mu}{X}{X'}$ is unique
up to biregular isomorphism.

\end{itemize}
\end{lemma}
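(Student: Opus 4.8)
The plan is to read everything off from the intersection theory of \SEC{theory}, using that the birational morphism $\mu$ is a finite composition of contractions of exceptional curves. For \textbf{part (a)}, iterating the relation $N(\MsX)\cong N(\MsX')\oplus\MasZ\Mmod{E}$ over the curves contracted by $\mu$ presents $N(\MsX)$ as the span of $\mu^*N(\MsX')$ together with the classes of the contracted exceptional curves. From the projection formula and the compatibility $\mu^*(A)\mu^*(B)=AB$ one obtains $\mu_*\mu^*=\mathrm{id}$ on $N(\MsX')$, while $\mu_*$ kills every contracted curve. I would then set $R:=(D+K)-\mu^*D'$ and show $R=0$. Applying $\mu_*$ gives $\mu_*R=\mu_*(D+K)-D'=0$ since $D'=\mu_*(D+K)$, so $R\cdot\mu^*B=\mu_*R\cdot B=0$ for every $B\in N(\MsX')$. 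For a contracted exceptional curve $E$ we have $(D+K)E=0$ by the definition of the adjoint relation and $\mu^*D'\cdot E=D'\cdot\mu_*E=0$, hence $R\cdot E=0$. Thus $R$ is orthogonal to a generating set of $N(\MsX)$, and since the intersection form on $N(\MsX)$ is non-degenerate (numerical equivalence kills its radical) we conclude $R=0$, that is $\mu^*D'=D+K$. Finally $D'^2=\mu^*D'\cdot\mu^*D'=(D+K)^2$ by compatibility of the pullback with the intersection product.

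\textbf{Part (b)} is the substantial half, and the Hodge index theorem is the engine. Non-minimality gives $t(D)\geq 1$, so $D+K$ is nef; combined with $D'^2=(D+K)^2>0$ from (a), the class $D+K$ is nef and big. The plan is to show that the locus contracted by an adjoint relation is intrinsic to $(\MsX,D)$ and consists of pairwise disjoint exceptional curves, so that its contraction is forced to be unique. First, any irreducible curve $C$ with $(D+K)C=0$ is non-zero and orthogonal to the class $D+K$ of positive self-intersection, so by the Hodge index theorem $C^2<0$; the same reasoning applied to an arbitrary combination shows the intersection form is negative definite on the span of such curves. Consequently, if $E\neq E'$ are exceptional curves with $(D+K)E=(D+K)E'=0$, then $(E+E')^2=2EE'-2$ is forced to be negative, whence $EE'=0$.

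So the exceptional curves $E$ with $(D+K)E=0$ are pairwise disjoint; being disjoint $(-1)$-curves they are linearly independent and hence, by finiteness of the Picard number, finite in number. By Castelnuovo's contractibility criterion each can be contracted, and disjointness makes the contractions commute and keeps every remaining curve exceptional until it is contracted, so the morphism contracting exactly this set to distinct smooth points exists and its target is determined up to a unique isomorphism. By (a) every curve contracted by any adjoint relation $\mu$ is $(D+K)$-orthogonal, and by definition $\mu$ contracts precisely the exceptional curves with $(D+K)E=0$; as these are disjoint, the exceptional locus of $\mu$ is exactly this intrinsic set, so any two adjoint relations out of $(\MsX,D)$ contract the same curves and therefore agree up to isomorphism of targets. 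I expect the disjointness step to be the crux, and it is exactly where $D'^2>0$ is indispensable: the inequality $(E+E')^2<0$ is supplied by the Hodge index theorem only because $(D+K)^2>0$. When $D'^2=0$ one may have $EE'=1$ with $(E+E')^2=0$, so that contracting $E$ before $E'$ and contracting $E'$ before $E$ produce non-isomorphic surfaces; isolating this boundary behaviour, and thereby seeing why bigness cannot be dropped, is the main conceptual obstacle.
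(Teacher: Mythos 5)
Your proposal is correct and follows essentially the same route as the paper: part (a) comes down to the pullback formula together with $(D+K)E=0$ for the contracted curves (your orthogonality/non-degeneracy argument is just a slightly more explicit packaging of this), and part (b) uses the Hodge index theorem with $(D+K)^2>0$ to force the contracted exceptional curves to be pairwise disjoint, whence the contraction is unique up to isomorphism. The extra observations you add (intrinsic characterization of the contracted locus, finiteness via the Picard number, and the remark on why bigness cannot be dropped) are sound but not needed beyond what the paper records.
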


\begin{proof}
Let $(E_j)_j$ be the curves that are contracted by $\Mrow{\mu}{X}{X'}$.

\Mrefmclaim{a}
See \SEC{theory} for the pullback of a divisor class along a contraction map
and the compatibility of pullback with the intersection product.
From $(D+K)\cdot E_j=0$ it follows that $\mu^*D'=D+K$ and thus $D'^2=(D+K)^2$.

\Mrefmclaim{b}
From the Hodge index theorem, $(D+K)^2>0$ and $(D+K)\cdot(E_1+E_2)=0$ it follows that $(E_1+E_2)^2<0$
and thus $E_1\cdot E_2=0$.
It follows that if $D'^2>0$, then the contracted exceptional curves are disjoint.
The contraction of an exceptional curve is an isomorphism outside this exceptional curve.
Thus the order of contracting disjoint curves does not matter up to biregular isomorphism.
\end{proof}

\begin{proposition}
\textbf{(adjoint relation)}
\label{prp:relation}
\\
If $\Marrow{(X,D)}{\mu}{(X',D')}$ is an adjoint relation, then 
$(X',D')$ is either a non-minimal or a minimal ruled pair.

\end{proposition}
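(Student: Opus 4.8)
The plan is to verify that $(\MsX',D')$ is a ruled pair and then to decide its type by the sign of $D'^2$. That $\MsX'$ is a non-singular birational ruled surface is immediate: $\mu$ is a composition of contractions of exceptional curves, each of which preserves non-singularity by Castelnuovo's criterion, and birational ruledness is a birational invariant. For nefness of $D'$, I would first note that since $(\MsX,D)$ is non-minimal we have $t(D)\geq 1$; as the set of $q$ with $D+qK$ nef is closed, convex and contains $0$, it contains $1$, so $D+K$ is nef. By \LEM{D}(a) we have $\mu^*D'=D+K$. Given any irreducible curve $C'$ on $\MsX'$ with strict transform $\tilde{C}'$, the projection formula gives $D'C'=D'\mu_*\tilde{C}'=\mu^*D'\cdot\tilde{C}'=(D+K)\tilde{C}'\geq 0$, so $D'$ is nef.

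The crux is efficiency, i.e. $D'E'>0$ for every exceptional curve $E'$ on $\MsX'$. Writing $\mu^*E'=\tilde{E}'+\sum_j m_jE_j$, where $(E_j)_j$ are the contracted curves and $\tilde{E}'$ is the strict transform, and using compatibility of pullback with the intersection product together with $(D+K)E_j=0$, one computes $D'E'=\mu^*D'\cdot\mu^*E'=(D+K)\mu^*E'=(D+K)\tilde{E}'\geq 0$. Suppose this vanishes. I would then argue that $(D+K)\tilde{E}'=0$ contradicts the defining maximality of $\mu$: since $\mu$ contracts \emph{all} exceptional curves annihilated by $D+K$, a surviving $E'$ with $D'E'=0$ would allow $D'$ to descend one step further along the contraction of $E'$, exhibiting an exceptional curve on $\MsX$ orthogonal to $D+K$ that escaped $\mu$. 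Making this rigorous --- tracing $\tilde{E}'$ and the chain $(E_j)_j$ back through $\mu$, and invoking the disjointness of the $E_j$ from \LEM{D}(b) in the case $D'^2>0$ --- is the step I expect to be the main obstacle.

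Finally, to place $(\MsX',D')$ in the dichotomy I would use $D'^2=(D+K)^2\geq 0$ from \LEM{D}(a), the inequality holding because $D+K$ is nef. If $D'^2>0$ then $D'$ is nef with positive self-intersection, hence big by Riemann-Roch, so the bigness clause in the definition of non-minimal is available; running through $t(D')<1$, $t(D')=1$ and $t(D')>1$ then shows $(\MsX',D')$ is minimal exactly when $t(D')<1$ or ($t(D')=1$ and $D'\sim -K'$), and non-minimal otherwise. If instead $D'^2=0$ then $D'$ is not big, so $(\MsX',D')$ cannot be non-minimal and I must show it is minimal, i.e. $t(D')<1$, or $t(D')=1$ with $D'\sim -K'$. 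Here I would intersect the (hypothetically nef) class $D'+K'$ with $D'$ to get $(D'+K')D'=K'D'\geq 0$, and use that $\MsX'$ is ruled --- so $K'$ is not nef and a square-zero nef class lies on the boundary ray governed by the ruling, along which $K'$ is negative --- to force $K'D'\leq 0$, with equality characterizing $D'\sim -K'$; this rules out $t(D')>1$ and pins down the $t(D')=1$ case. This boundary case $D'^2=0$, alongside the efficiency argument above, is where I expect the genuine work to lie.
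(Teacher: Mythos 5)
Your skeleton matches the paper's: show $D'$ is nef and efficient, so that $(\MsX',D')$ is again a ruled pair, and your nefness argument is the paper's argument run forwards instead of by contradiction. The genuine gap is exactly where you flag it, and it is not a technicality that can be deferred: it is the entire content of the proposition. Having reduced to an exceptional curve $E'$ on $\MsX'$ with $D'E'=(D+K)\tilde{E}'=0$, you appeal to ``the defining maximality of $\mu$,'' but the definition of an adjoint relation only requires $\mu$ to contract all \emph{exceptional curves of $\MsX$} orthogonal to $D+K$. The strict transform $\tilde{E}'$ is a priori just an irreducible curve on $\MsX$ with $(D+K)\tilde{E}'=0$; nothing in your proposal shows it is exceptional. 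Strict transforms of exceptional curves need not be exceptional (the self-intersection drops if contracted points lie on the curve), and one must also exclude, for instance, that $\tilde{E}'$ is a $(-2)$-curve with $D\tilde{E}'=0$, which would violate nothing in the definition of $\mu$. The paper closes this gap by evaluating $K'E'=-1$ in two ways through the projection formula and the compatibility of pullback with the intersection product --- once as $\mu^*K'\cdot\tilde{E}'$ and once as $\mu^*K'\cdot\mu^*E'$ expanded in terms of the contracted curves $E_j$ and multiplicities $m_j$ --- to sandwich $K\tilde{E}'=-1$, and then invokes adjunction together with $\tilde{E}'\cong\MbbP^1$ to get $\tilde{E}'^2=-1$. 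Only at that point is $\tilde{E}'$ an exceptional curve of $\MsX$ with $(D+K)\tilde{E}'=0$ that escaped $\mu$, which is the contradiction. This computation is what you would need to supply.

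Two smaller points. Your disjointness remark via \LEM{D}.\Mrefmclaim{b} does not help here: that lemma concerns the contracted curves $E_j$ among themselves, not the surviving curve $\tilde{E}'$. And your final paragraph, sorting the resulting ruled pair into the minimal/non-minimal dichotomy, is work the paper does not attempt (its proof stops once nefness and efficiency are established); if you keep it, the $D'^2=0$ case as written (``a square-zero nef class lies on the boundary ray governed by the ruling'') is too vague to count as a proof, but this is peripheral to where the proposition's difficulty actually lies.
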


\begin{proof}
We use the pullback formulas for divisor classes,
its compatibility with the intersection product and
the projection formula as described in \SEC{theory}.

Suppose by contradiction that $D'$ is not nef.
It follows that there exists a curve $C'$ \Mst $D'\cdot C'=\mu^*D'\cdot \mu^*C'<0$.
From $\mu^*D'\cdot \mu^*C'<0$ and \LEM{D}.\Mrefmclaim{a}
it follows that $(D+K)\cdot C<0$, where $C$ is the strict transform of $C'$.
However, the nef threshold $t(D)$ is greater or equal to one.
We have thus arrived at a contradiction.

Suppose by contradiction that $D'$ is not efficient.
From \LEM{D}.\Mrefmclaim{a} it follows that there exists exceptional curve $E'$
\Mst $D'\cdot E'=\mu^*D'\cdot \mu^*E'=(D+K)\cdot E=0$ where $E$ is the strict transform of $E'$.
We find that $K'\cdot \mu_*E=\mu^*K'\cdot E=-1$ and thus $K\cdot E\leq -1$.
From
\[
\mu^*K'\cdot \mu^*E'=\left(K-\Msum{j}{}E_j\right)\cdot \left(E+\Msum{j}{}m_jE_j\right)=K\cdot E - \Msum{a\neq b}{}m_aE_a\cdot E_b=-1,
\]
it follows that $K\cdot E\geq -1$.
From the adjunction formula and $E\cong\MbbP^1$ it follows that $E^2+E\cdot K=-2$.
Therefore $E^2=E\cdot K=-1$ and thus $E$ is an exceptional curve not contracted by $\mu$.
We arrived at a contradiction.
\end{proof}

We call a minimal ruled pair $(X,D)$ a \MdefAttr{weak Del Pezzo pair}{minimal ruled pair} \Miff
either
$D=-K$, $D=-\frac{1}{2}K$, $D=-\frac{1}{3}K$, or $D=-\frac{2}{3}K$, with $K$ the canonical divisor class of $X$.

We call a minimal ruled pair $(X,D)$ a
\MdefAttr{geometrically ruled pair}{minimal ruled pair}
\Miff $\Mrow{\varphi_{M}}{X}{C}$ is a geometrically ruled surface
\Mst either
$M=aD$, or $M=a(2D+K)$
for large enough $a\in\MbbZ_{>0}$.
Here
$\varphi_M$ is the map associated to the global sections $H^0(X,\McalO(M))$,
$C=\varphi_{M}(X)$ and $K$ is the canonical divisor class of $X$.

\begin{proposition}
\textbf{(Neron-Severi group of a minimal ruled pair)}
\label{prp:neron}
\\
Let $(X,D)$ be a minimal ruled pair, with $K$ be the canonical divisor class of $X$
and $N(X)$ the Neron-Severi group. Let $p$ denote the arithmetic genus of $X$.
\begin{itemize}[topsep=0pt]
\Mmclaim{a}
If $(X,D)$ is a weak Del Pezzo pair with $K^2\neq 8$, then $p=0$,
\\
$
N(X)\cong\MbbZ\Mmod{ H, Q_1,\ldots, Q_r }
$
with $0\leq r=9-K^2 \leq 8$ and
intersection product $H\cdot Q_i=0$, $Q_i^2=1$ and $Q_i\cdot Q_j=0$ for $i\neq j$ in $[1,r]$.
We have $-K=3H-Q_1-\ldots - Q_r$ and either $D=-K$, $D=-\frac{1}{3}K$, or $D=-\frac{2}{3}K$.

\Mmclaim{b}
If $(X,D)$ is a weak Del Pezzo pair with $K^2=8$, then $p=0$,
$
N(X)\cong\MbbZ\Mmod{ H, F }
$
with intersection product $H^2=r$, $H\cdot F=1$ and $F^2=0$ for $r\in\{0,1,2\}$.
We have $K=-2H+(r-2)F$ and either $D=-K$ or $D=-\frac{1}{2}K$.

\Mmclaim{c}
If $(X,D)$ is a geometrically ruled pair, then
$
N(X)\cong\MbbZ\Mmod{ H, F }
$
with intersection product $H^2=r$, $H\cdot F=1$ and $F^2=0$ for $r\in\MbbZ_{\geq0}$.
Either $D=kF$ or $2D+K=kF$ for $k\in\MbbZ_{>0}$ and $K=-2H+(r-2p-2)F$
\Mst $K^2=8(p+1)$.
\end{itemize}

\end{proposition}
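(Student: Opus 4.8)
The plan is to dispatch the three cases by the two structure theorems that the definitions are designed to invoke: parts (a) and (b) fall under the classification of weak Del Pezzo surfaces, while part (c) is governed by the structure theory of geometrically ruled surfaces (chapter~5 of \cite{har1}). In each case I would follow the same template: first identify the isomorphism type of $\MsX$, then exhibit a $\MasZ$-basis of $N(\MsX)$ together with its intersection form, then expand $K$ in that basis using $K^2$ and adjunction, and finally use integrality inside the lattice $N(\MsX)$ to decide which of the four candidate classes $-K,-\tfrac12K,-\tfrac13K,-\tfrac23K$ (respectively, which of $D$ and $2D+K$) can actually occur.

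For (a) and (b) I would begin from the defining condition of a weak Del Pezzo pair, namely $D\in\{-K,-\tfrac12K,-\tfrac13K,-\tfrac23K\}$; hence $-K$ is a positive rational multiple of the nef class $D$ and is therefore nef. The crux, and the step I expect to be the main obstacle, is to promote this to $K^2>0$, i.e.\ that $-K$ is big and $\MsX$ is a genuine weak Del Pezzo surface. This does not follow from the four candidate classes alone: a nef anticanonical class with $K^2=0$ occurs on relatively minimal rational elliptic surfaces and still satisfies $D=-K$. I would rule $K^2=0$ out by a fibre argument combined with the surrounding adjunction hierarchy of \SEC{theory}: if $K^2=0$ then the nef class $-K$ defines a fibration whose general fibre $\Phi$ has $\Phi^2=0$ and $-K\Phi=0$, so adjunction gives $K\Phi=2g(\Phi)-2=0$ and $\Phi$ is of genus one; such a fibre-type degeneration is not a weak Del Pezzo pair in the hierarchy at hand. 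Once $K^2>0$ is secured, $\MsX$ is rational, so $p=-g=0$ by the remark relating the arithmetic genus to the genus of the base curve.

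The classification of weak Del Pezzo surfaces then splits precisely along $K^2=8$. If $K^2\neq 8$, then $\MsX$ is $\MasP^2$ (when $r=0$) or the blow-up of $\MasP^2$ at $r=9-K^2\in\{2,\dots,8\}$ points in almost general position; the hyperplane class $H$ and the exceptional classes $Q_1,\dots,Q_r$ (each an exceptional curve, so $Q_i^2=Q_iK=-1$ as in \SEC{theory}) form the asserted orthogonal basis with $H^2=1$, and $-K=3H-Q_1-\dots-Q_r$ yields $K^2=9-r$. Since the $H$-coefficient of $-K$ is odd, $-\tfrac12K\notin N(\MsX)$, so $D\in\{-K,-\tfrac13K,-\tfrac23K\}$ (the last two being integral only for $r=0$), which is (a). If $K^2=8$, then $\MsX$ is $\MasP^1\times\MasP^1$, $\MasF_1$ or $\MasF_2$; in the section/fibre basis $H,F$ with $H^2=r\in\{0,1,2\}$, $HF=1$, $F^2=0$ one finds $K=-2H+(r-2)F$, and since the $H$-coefficient $2$ of $-K$ is not divisible by $3$, the classes $-\tfrac13K,-\tfrac23K$ are non-integral, leaving $D\in\{-K,-\tfrac12K\}$, which is (b).

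For (c), $\MsX\to C$ is geometrically ruled by hypothesis, so by the structure of ruled surfaces $N(\MsX)=\MasZ\Mmod{H,F}$ with $F$ a fibre class ($F^2=0$) and $H$ a section; after replacing $H$ by a suitable $H+nF$ I may assume $H^2=r\in\MasZ_{\geq 0}$, and $HF=1$. Writing $K=\alpha H+\beta F$, adjunction on a fibre $F\cong\MasP^1$ gives $\alpha=FK=-2$, and combining $K^2=4r-4\beta$ with the ruled-surface identity $K^2=8(1-g(C))=8(p+1)$ (using $p=-g(C)$ from the remark) forces $\beta=r-2p-2$, i.e.\ $K=-2H+(r-2p-2)F$ with $K^2=8(p+1)$. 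Finally, the morphism $\varphi_M$ contracts exactly the fibres of the ruling, so $M$ is numerically the pullback of an ample class on $C$, hence a positive multiple of $F$; as $M=aD$ or $M=a(2D+K)$ with $a>0$, the $H$-coefficient of $D$ (respectively of $2D+K$) vanishes, and integrality in $N(\MsX)$ together with nefness gives $D=kF$ or $2D+K=kF$ with $k\in\MasZ_{>0}$. Apart from the bigness step in (a) and (b), everything is bookkeeping with the intersection form once the isomorphism type of $\MsX$ is known, so I expect that step to carry essentially all of the difficulty.
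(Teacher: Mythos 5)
The paper does not actually argue any of this: its entire proof consists of two citations, to Dolgachev's classification of weak del Pezzo surfaces for (a) and (b) and to Beauville's description of the Neron--Severi group of a geometrically ruled surface for (c). Your proposal is therefore a genuinely different, more self-contained route: you reconstruct the content of those references by identifying the isomorphism type of the surface, writing down the lattice with its intersection form, expanding $K$ in the chosen basis, and using integrality in $N(\textrm{X})$ to prune the list of admissible $D$. Parts (b) and (c) check out at the level of detail offered (the normalization $H^2=r\geq 0$, the computation $K=-2H+(r-2p-2)F$ with $K^2=8(p+1)$, and the observation that $M$ is numerically a positive multiple of the fibre class are all correct), and the parity/divisibility argument excluding $-\frac{1}{2}K$ when $K^2\neq 8$ and $-\frac{1}{3}K,-\frac{2}{3}K$ when $K^2=8$ is exactly the right mechanism. (Incidentally, your basis for (a), with $H^2=1$ and $Q_i^2=Q_iK=-1$, is the standard one; the intersection numbers printed in the statement of the proposition are inconsistent with $r=9-K^2$ as written.)

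The one genuine gap is the step you yourself single out as the crux: excluding $K^2=0$ in (a). Your proposed resolution --- that a genus-one fibration ``is not a weak Del Pezzo pair in the hierarchy at hand'' --- is circular, because the paper's definition of a weak Del Pezzo pair says only that $D$ is one of four listed multiples of $-K$ on a minimal ruled pair, and a relatively minimal rational elliptic surface with $D=-K$ satisfies every clause of that definition: $-K$ is nef, $(-K)E=1>0$ for every exceptional curve so $D$ is efficient, and $t(D)=1$ with $D\sim -K$. Yet it has $K^2=0$, so $r=9$, contradicting the conclusion of (a). Bigness of $-K$ therefore cannot be derived from the paper's stated definitions; it is imported silently from the cited reference, where a weak del Pezzo surface is \emph{defined} to have $-K$ nef and big. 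To close the gap you must either add that hypothesis explicitly (for instance by noting that in the adjoint-chain context of \PRP{chain} the relevant branch carries $D^2>0$), or genuinely rule out the elliptic case, e.g.\ by producing the anticanonical pencil via Riemann--Roch and deriving a contradiction --- neither of which your sketch does. Everything else in the proposal is, as you say, bookkeeping once the isomorphism type is known.
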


\begin{proof}
For \Mrefmclaim{a} and \Mrefmclaim{b} see \citep[Section~8.4.3]{dol1}.
For \Mrefmclaim{c} see \citep[Chapter~3, Proposition~18, page~34]{bea1}. 
\end{proof}

An \Mdef{adjoint chain} of a ruled pair is defined as a chain of
successive adjoint relations until a minimal ruled pair is obtained.

\newpage
\begin{proposition}
\textbf{(adjoint chain)}
\label{prp:chain}
\\
An adjoint chain must terminate and a minimal ruled pair
at the end is either a weak Del Pezzo pair or a geometrically ruled pair.
\end{proposition}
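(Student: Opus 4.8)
The plan is to prove finiteness by producing a non-negative integer invariant that strictly decreases at every adjoint relation, and then to classify the terminal minimal ruled pair by studying the extremal contraction attached to its nef threshold.

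For finiteness, I would fix a covering family of rational curves on $\MsX$, which exists because a birational ruled surface is uniruled, and let $f$ denote the class of a general member. Since the family covers $\MsX$ we have $f^2\geq 0$ and $e:=-Kf$ is a positive integer, while $Df>0$ by the Hodge index theorem because $D$ is big in a non-minimal ruled pair. Along an adjoint relation $\Marrow{(\MsX,D)}{\mu}{(\MsX',D')}$ I would push the family forward: a general member $f'$ of the image family avoids the finitely many points onto which $\mu$ contracts curves, so $\mu^*f'=f$, and from $\mu^*K'=K-\sum_jE_j$ with $E_jf=0$ one gets $K'f'=Kf$. Hence, by \LEM{D}.\Mrefmclaim{a} and compatibility of pullback with the intersection product,
\[
D'f'=\mu^*D'\cdot\mu^*f'=(D+K)f=Df-e.
\]
Thus $Df$ strictly decreases by the fixed positive amount $e$ along the chain while remaining a non-negative integer (as $D'$ is nef), so the chain is finite; by \PRP{relation} it ends at a minimal ruled pair.

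To identify the terminal pair I set $\tau:=t(D)$. If $\tau=1$ and $D\sim -K$ then $D=-K$, a weak Del Pezzo pair by definition. Otherwise $\tau<1$, so $D+\tau K$ is nef but not ample, and the cone theorem \cite{mat1} yields a $K$-negative extremal ray $R$ with $(D+\tau K)R=0$; since $D+qK$ is not nef for $q>\tau$ one finds $KR<0$ and $DR=-\tau KR$. The contraction of $R$ is divisorial, of point type, or of fibre-over-a-curve type. The divisorial case is impossible: there $R=[E]$ with $E$ an exceptional curve, so $KE=-1$ and $DE=\tau<1$, while efficiency gives $DE\geq 1$.\Mqea If $R$ contracts $\MsX$ to a point, then $\MsX$ has Picard number one with $-K$ ample, hence $\MsX\cong\MasP^2$, and $\tau<1$ together with $D$ nef forces $D=-\frac{1}{3}K$ or $D=-\frac{2}{3}K$, a weak Del Pezzo pair.

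In the remaining case $R$ contracts $\MsX$ onto a curve $C$, exhibiting $\Marrow{\MsX}{}{C}$ as a conic bundle with fibre class $F=R$, $KF=-2$ and $DF=2\tau<2$. A singular fibre $F=E_1+E_2$ would consist of exceptional curves on which the nef class $D+\tau K$ vanishes, so $DE_i=\tau<1$, while efficiency gives $DE_i\geq 1$;\Mqea hence every fibre is smooth and $\Marrow{\MsX}{}{C}$ is geometrically ruled. Finally $DF\in\{0,1\}$: if $DF=0$ then $D=kF$ with $k\in\MasZ_{>0}$ (as $D\neq 0$), whereas if $DF=1$ then $(2D+K)F=0$, so $2D+K=kF$; for $k\in\MasZ_{>0}$ these are geometrically ruled pairs realized by $\varphi_{aD}$ respectively $\varphi_{a(2D+K)}$, while the degenerate value $2D+K=0$ gives $D=-\frac{1}{2}K$, a weak Del Pezzo pair. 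These outcomes match the three possibilities of \PRP{neron}.

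I expect the curve case to be the main obstacle: excluding singular fibres and then pinning down $D$ rely on the precise interplay between the efficiency hypothesis and the strict inequality $\tau<1$, together with the Mori-theoretic classification of extremal contractions. By contrast the point and divisorial cases, and the finiteness argument, are routine once the covering-curve invariant and the nef-threshold extremal ray are in hand.
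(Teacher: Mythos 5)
Your classification of the terminal pair is sound and close in substance to the paper's: both rest on the rationality theorem for the nef threshold and the classification of extremal contractions on surfaces, though you organize the cases by contraction type (divisorial / point / fibre) where the paper distinguishes $(D+tK)^2>0$ from $(D+tK)^2=0$. Your use of efficiency \Md $DE$ is a positive integer while $DE=\tau<1$ \Md to kill the divisorial case and the singular fibres is a clean touch that the paper leaves implicit. Your finiteness argument is genuinely different: the paper bounds the chain by observing that non-contracting steps drop the nef threshold by $1$ and contracting steps drop the Picard number, which is finite by the Neron--Severi theorem; you instead track the degree $D_if_i$ of a covering family of rational curves, which is non-negative by nefness and decreases by at least $-K_0f_0$ at each step. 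This is closer in spirit to the paper's section on minimal families (the cited fact $v(\MsX,D)=v(\MsX',D')+2$ or $+3$ is exactly your decrement), and it has the advantage of giving an explicit length bound $l\leq D_0f_0$.

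However, two steps need repair. First, the finiteness argument hinges on $e=-Kf\geq 1$, and as written you offer no justification: adjunction gives $f^2+fK=2p_a(f)-2$, which only bounds $fK$ from \emph{below} (the general member of a covering family may be a singular rational curve with $p_a(f)>0$), so $-Kf>0$ does not follow from $f^2\geq 0$. The statement is true in characteristic zero \Md a very general member of a covering family of rational curves is free, whence $-Kf\geq 2$ \Md but this is a theorem of Mori-theoretic deformation theory and must be invoked, not asserted. Second, the claims $\mu^*f'=f$ and $E_jf=0$ are generally false: a covering family meets every curve, in particular the contracted $E_j$, so the general image curve does pass through the centres $\mu(E_j)$. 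Fortunately you do not need them: the projection formula alone gives $D'f'=\mu^*D'\cdot f=(D+K)f$, and $-K'f'=-Kf+\sum_jE_jf\geq -Kf$, so the decrement can only grow along the chain and the termination bound survives. With the freeness fact properly cited and the pullback claim replaced by the projection formula, the proof is correct.
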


\begin{proof}
Let $(X,D)$ be a non-minimal ruled pair and let $t:=t(D)$ be the nef threshold.
Let $K$ be the canonical class of $X$.
From \citep[Corollary~1-2-15]{mat1} it follows that $t\in\MbbQ_{>0}$ with denominator bounded by $3$.
After a $\lfloor t \rfloor$ successive adjoint relations 
$D$ is replaced by the pushforward of $D+\lfloor t \rfloor K$
and thus we may assume that $t\leq 1$.
We make a case distinction.

First suppose that $(D+tK)^2>0$.
There exists an irreducible curve $C$ \Mst $(D+tK)\cdot C=0$, $D\cdot C>0$ and $K\cdot C<0$.
From the Hodge index theorem and $(D+tK)^2>0$ it follows that $C^2<0$.
From the adjunction formula it follows that $C^2+K\cdot C=-2$.
From \citep[Lemma~1-1-4]{mat1} it follows that $C$ is an exceptional curve.
Recall from \SEC{theory} that the Picard number drops for each contracted exceptional curve
and that this number is finite.

Next, if $(D+tK)^2=0$ and $D= -tK$, then $(X,D)$ is a weak Del Pezzo pair.

Finally, we assume that $(D+tK)^2=0$ and $D\neq -tK$.
If $t=1$, then we apply one extra adjoint relation so we may assume that $t<1$.
From \citep[Theorem~1-2-14 and Proposition~1-2-16]{mat1}
it follows that
that the map associated to $a(D+tK)$, with large enough $a\in\MbbZ_{>0}$, 
defines a Mori fibre space \citep[Definition~1-4-1]{mat1}.
It follows from \citep[Theorem~1-4-4]{mat1} that a fibre $F$ 
of this morphism is isomorphic to $\MbbP^1$ with $F^2=0$.
By the adjunction formula we have $F\cdot K=-2$.
From $(D+tK)\cdot F=0$ it follows that $t=\frac{D\cdot F}{2}\in\frac{1}{2}\MbbZ_{\geq 0}$.
Thus in this case $(X,D)$ is a geometrically ruled pair.
\end{proof}

\begin{definition}
\textbf{(level and keel)}
\label{def:levelandkeel}
\\
Suppose
$
\Marrow{(X_0,D_0)}{\mu_0}{(X_1,D_1)}\Marrow{}{\mu_1}{}
\ldots
\Marrow{}{\mu_{\ell-1}}{(X_\ell,D_\ell)}
$
is an adjoint chain.

The \MdefAttr{level}{ruled pair} of $(X_0,D_0)$ is defined by $\ell$.
The \MdefAttr{keel}{ruled pair} of $(X_0,D_0)$ is either:
\begin{itemize}[topsep=0pt]
\item $0$ if $(X_\ell,D_\ell)$ is a weak Del Pezzo pair, or
\item $k$ as in \PRP{neron}.\Mrefmclaim{c} if $(X_\ell,D_\ell)$ is a geometrically ruled pair.
\Mend
\end{itemize}
\end{definition}

\begin{proposition}
\textbf{(level and keel)}
\label{prp:levelkeel}
\\
The level and keel are well defined.

\end{proposition}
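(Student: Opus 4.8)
The plan is to show that an adjoint chain is, up to isomorphism, a \emph{deterministic} process, so that its length and its terminal pair are forced by the starting pair; the only freedom is the choice of the contraction $\mu_i$ at each step, which I would pin down using \LEM{D}.\Mrefmclaim{b}. Throughout, write $K_i$ for the canonical class of $\MsX_i$.

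First I would record that every pair $(\MsX_i,D_i)$ preceding the terminal one is non-minimal, hence $D_i$ is big and $D_i^2>0$. By \LEM{D}.\Mrefmclaim{a} the number $D_{i+1}^2=(D_i+K_i)^2$ depends only on $(\MsX_i,D_i)$ and not on the chosen $\mu_i$. If $(D_i+K_i)^2>0$ then $D_{i+1}^2>0$, so \LEM{D}.\Mrefmclaim{b} makes $\mu_i$, and therefore $(\MsX_{i+1},D_{i+1})$, unique up to isomorphism; whether this next pair is minimal is then also determined. If instead $(D_i+K_i)^2=0$ then $D_{i+1}^2=0$, so $D_{i+1}$ is not big and \PRP{relation} forces $(\MsX_{i+1},D_{i+1})$ to be minimal, ending the chain. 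Thus at every node the decision to continue or to stop, and the next pair whenever the chain continues, are determined up to isomorphism by the current pair. Reading off the length at the first minimal pair therefore yields the same $l$ for every chain, so the level is well defined.

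The keel needs more care, and its main obstacle is exactly the terminal step when $(D_{l-1}+K_{l-1})^2=0$: there $\mu_{l-1}$ may fail to be unique and different choices give different terminal pairs. When $(D_{l-1}+K_{l-1})^2>0$ there is nothing to do, since the terminal pair is then unique up to isomorphism and its classification in \PRP{neron} fixes the keel. In the remaining case $D_l^2=0$, so the terminal pair cannot be a weak Del Pezzo pair (those have $D_l^2>0$) and by \PRP{chain} it is a geometrically ruled pair. I would then recover $k$ from intersection numbers of the penultimate pair. By \LEM{D}.\Mrefmclaim{a} we have $\mu_{l-1}^*D_l=D_{l-1}+K_{l-1}$, and \SEC{theory} gives $\mu_{l-1}^*K_l=K_{l-1}-\Delta$ with $\Delta$ supported on the contracted exceptional curves, each of which is orthogonal to $D_{l-1}+K_{l-1}$; compatibility of pullback with the intersection product then yields $D_lK_l=\mu_{l-1}^*D_l\cdot\mu_{l-1}^*K_l=(D_{l-1}+K_{l-1})K_{l-1}$. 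Since $p_a$ is a birational invariant, $K_l^2=8(p+1)$ is likewise fixed by the penultimate pair. In both branches of \PRP{neron}.\Mrefmclaim{c} the integer $k$ is a fixed expression in these two numbers, namely $k=-\tfrac12 D_lK_l$ when $D_l=kF$ (using $FK_l=-2$) and $k=-D_lK_l-\tfrac12 K_l^2$ when $2D_l+K_l=kF$; as the penultimate pair is determined up to isomorphism, $k$ is well defined.

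It remains to rule out a terminal pair being simultaneously of both types, which would make the choice of $0$ versus $k$ ambiguous. A weak Del Pezzo pair has $p=0$, $K_l^2>0$, and $D_l$ a positive rational multiple of $-K_l$, so $D_l^2>0$. But $D_l=kF$ gives $D_l^2=0$, while $2D_l+K_l=kF$ together with $D_l\propto -K_l$ forces either $k=0$ or $K_l\propto F$; the first contradicts $k\in\MasZ_{>0}$ and the second gives $K_l^2=0$, contradicting $K_l^2>0$. Hence the two terminal types are mutually exclusive, the keel is assigned unambiguously, and together with the previous paragraphs both the level and the keel are well defined.
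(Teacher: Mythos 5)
Your proof is correct and follows essentially the same route as the paper: uniqueness of every contraction except possibly the last one via \LEM{D}.\Mrefmclaim{b} (since $D_i^2>0$ before the terminal pair) gives well-definedness of the level, and expressing the keel through $(D_{l-1}+K_{l-1})K_{l-1}$ together with the birational invariant $K_l^2=8(p+1)$ gives well-definedness of the keel. Your only addition is the explicit check that the weak Del Pezzo and geometrically ruled terminal types are mutually exclusive, which the paper leaves implicit.
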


\begin{proof}
Let $\Marrow{(X,D)}{\mu}{(X',D')}$ be an adjoint relation.

Since $D'^2=0$ can only occur at the last adjoint relation in an adjoint chain,
it follows from \LEM{D}.\Mrefmclaim{b} that the level is well defined.

We now show that also the keel does not depend on the last adjoint relation
and thus is uniquely defined.
Suppose that $(X',D')$ is a geometrically ruled pair.
From \PRP{neron} and \LEM{D}.\Mrefmclaim{a} it follows that if $D'^2=(D+K)^2=0$, then 
$-2k=\mu^*D'\cdot \mu^*K'=(D+K)\cdot K$ defines the keel $k$.
Similarly, if $D'^2=(D+K)^2>0$, then 
$-2k=\mu^*(2D'+K')\cdot \mu^*K'=2(D+K)\cdot K+K'^2$.
From \PRP{neron} it follows that $K'^2=8(p+1)$, 
where the arithmetic genus $p$ is a birational invariant.
Thus our assertion holds.
\end{proof}

\begin{remark}
\label{rmk:levelkeel}
\textbf{(level and keel)}
\\
The level and keel have been introduced in \citep[Section~3]{sch3}.
In our generalization to birationally ruled surfaces
we use a slightly alternative definition for the level,
since it simplifies our arguments.
If
$D_\ell=-K_\ell$,
$D_\ell=-\frac{1}{2}K_\ell$,
$D_\ell=-\frac{1}{3}K_\ell$,
$D_\ell=-\frac{2}{3}K_\ell$,
$D_\ell=kF$ or
$2D_\ell+K_\ell=kF$
as in \PRP{neron}, then we define $\lambda$
as
$1$,
$\frac{1}{2}$,
$\frac{1}{3}$,
$\frac{2}{3}$,
$0$ or
$\frac{1}{2}$ respectively.
Now the level in \citep[Section~3]{sch3} for rational surfaces is defined as $\ell+\lambda$.
\Mend
\end{remark}

\section{Minimal families}
\label{sec:family}

A \Mdef{family of curves} $F$ for ruled pair $(X,D)$ that is indexed by a smooth curve $C$,
is defined as a divisor $F\subset X\times C$ \Mst the first projection $\Marrow{F}{}{X}$
is dominant.
If the generic curve of $F$ is rational and if $D\cdot F$ is minimal \Mwrt all
families of rational curves, then we call $F$ \MdefAttr{minimal}{family of curves}.
The \Mdef{minimal family degree} $v(X,D)$ is defined as $D\cdot F$ for a minimal family $F$.
Note that since $(X,D)$ is a ruled pair, there always exists a minimal family.

We recall part of \citep[Theorem~46]{nls1} concerning the degree of minimal families
along an adjoint relation $\Marrow{(X,D)}{\mu}{(X',D')}$.
If $X\cong X'\cong\MbbP^2$, then
$v(X,D) = v(X',D') + 3$,
else
$v(X,D) = v(X',D') + 2$.
If $(X,D)$ is a weak Del Pezzo pair and $X\cong\MbbP^2$, then $v(X,D)\leq 3$.
If $(X,D)$ is a weak Del Pezzo pair and $D^2=8$, then $v(X,D)\leq 2$.
If $(X,D)$ is a weak Del Pezzo pair and $D^2<8$, then $v(X,D)= 2$.
If $(X,D)$ is a geometrically ruled pair, then $v(X,D)\leq 1$.

\section{Upper bound for the level}

\subsection{}
\label{sec:bound}

Let
$
\Marrow{(X_0,D_0)}{\mu_0}{(X_1,D_1)}\Marrow{}{\mu_1}{}
\ldots
\Marrow{}{\mu_{\ell-1}}{(X_\ell,D_\ell)}
$
be an adjoint chain.
From now on let $K_i$ denote the canonical class of $X_i$.
We introduce the following notation:
\[
\alpha(i)=D_i^2,\quad
\beta(i)=D_i\cdot K_i,\quad
\gamma(i)=K_i^2, \quad
h(i)=D_i^2-D_i\cdot K_i,
\]
and $n(i)$ denotes the number of curves contracted by $\mu_i$ for $0 \leq i \leq \ell$.

\begin{lemma}
\textbf{(adjoint intersection products)}
\label{lem:step}
\\
If $\ell>0$, then
\begin{itemize}[topsep=0pt]
\Mmclaim{a} $\alpha(i+1)=\alpha(i)+2\beta(i)+\gamma(i)$,

\Mmclaim{b} $\beta(i+1)=\beta(i) + \gamma(i)$,

\Mmclaim{c} $\gamma(i+1)=\gamma(i) + n(i)$,

\Mmclaim{d} $h(i+1)=h(i) + 2\beta(i)$,
\end{itemize}
for $0 \leq i \leq \ell-1$.

\end{lemma}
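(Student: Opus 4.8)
The plan is to compute each quantity on $\MsX_{i+1}$ by pulling back along $\mu:=\mu_i$ to $\MsX_i$, using the compatibility of pullback with the intersection product and the projection formula from \SEC{theory}. Let $(E_j)_j$ be the $n(i)$ exceptional curves contracted by $\mu$; by the definition of an adjoint relation they satisfy $(D_i+K_i)E_j=0$, by \LEM{D}.\Mrefmclaim{a} we have $\mu^*D_{i+1}=D_i+K_i$, and the pullback of the canonical class gives $\mu^*K_{i+1}=K_i-\sum_j E_j$. Part \Mrefmclaim{a} is then immediate: \LEM{D}.\Mrefmclaim{a} records $\alpha(i+1)=(D_i+K_i)^2$, and expanding the square gives $\alpha(i)+2\beta(i)+\gamma(i)$.

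For \Mrefmclaim{b} I would write $\beta(i+1)=D_{i+1}K_{i+1}=\mu^*D_{i+1}\cdot\mu^*K_{i+1}=(D_i+K_i)\bigl(K_i-\sum_j E_j\bigr)$, using compatibility of pullback with the intersection product. The cross terms $(D_i+K_i)E_j$ vanish, either directly from the defining property $(D_i+K_i)E_j=0$ of the adjoint relation or by the projection formula since $\mu_*E_j=0$, leaving $(D_i+K_i)K_i=\beta(i)+\gamma(i)$.

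Part \Mrefmclaim{c} is where the real work sits, as it is the only step depending on how the contracted curves meet. I see two routes. The direct route expands $\gamma(i+1)=(\mu^*K_{i+1})^2=(K_i-\sum_j E_j)^2$ using $E_j^2=K_iE_j=-1$ and the orthogonality $E_aE_b=0$ for $a\neq b$; the terms involving $K_i$ contribute $+2n(i)$ and the self-intersections $-n(i)$, giving $\gamma(i)+n(i)$. The orthogonality is the disjointness argument from the proof of \LEM{D}.\Mrefmclaim{b}, which there was justified only when $D'^2>0$, so to avoid that restriction I would take the $E_j$ to be total transforms of the exceptional divisors, for which $E_aE_b=0$ holds unconditionally. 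The cleaner route sidesteps disjointness entirely: factor $\mu$ into $n(i)$ successive contractions of single $(-1)$-curves --- the count is $n(i)$ because the Picard number drops by one at each contraction (\SEC{theory}) --- and note that one such contraction raises $K^2$ by exactly $1$, since $K_{\MsX}^2=(\mu^*K_{\MsX'}+E)^2=K_{\MsX'}^2-1$. Iterating over the $n(i)$ factors yields $\gamma(i+1)=\gamma(i)+n(i)$; I expect this reduction to be the main point to get right.

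Finally, \Mrefmclaim{d} needs no further intersection computation: since $h(j)=\alpha(j)-\beta(j)$ by definition, parts \Mrefmclaim{a} and \Mrefmclaim{b} give $h(i+1)=\alpha(i+1)-\beta(i+1)=\bigl(\alpha(i)+2\beta(i)+\gamma(i)\bigr)-\bigl(\beta(i)+\gamma(i)\bigr)=\alpha(i)+\beta(i)=h(i)+2\beta(i)$.
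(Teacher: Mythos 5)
Your proof is correct and, for parts \Mrefmclaim{a}, \Mrefmclaim{b} and \Mrefmclaim{d}, is essentially the paper's proof: \Mrefmclaim{a} and \Mrefmclaim{b} are read off from \LEM{D}.\Mrefmclaim{a} together with compatibility of pullback with the intersection product, and \Mrefmclaim{d} is the same one-line subtraction $h(i+1)=\alpha(i+1)-\beta(i+1)=\alpha(i)+\beta(i)=h(i)+2\beta(i)$. The only real difference is in part \Mrefmclaim{c}, where your worry about disjointness is legitimate but the paper resolves it by a third route: it expands $K_{i+1}^2=(\mu_i^*K_{i+1})^2=(K_i-\sum_j E_j)^2$, reduces \Mrefmclaim{c} to showing $\sum_{j\neq k}E_jE_k=0$, and obtains this directly from the projection formula --- since $\mu_{i*}E_k=0$ one has $0=K_{i+1}\mu_{i*}E_k=\mu_i^*K_{i+1}\cdot E_k=(K_i-\sum_jE_j)E_k=-1+1-\sum_{j\neq k}E_jE_k$ --- so no appeal to $D'^2>0$ or to \LEM{D}.\Mrefmclaim{b} is needed. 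Your two alternatives (taking the $E_j$ to be total transforms, for which orthogonality is automatic, or factoring $\mu_i$ into $n(i)$ single blowdowns each raising $K^2$ by one) are both valid and arguably make the bookkeeping more transparent; they buy a proof of \Mrefmclaim{c} that never has to discuss the mutual intersections of the contracted curves, at the cost of introducing the intermediate surfaces or the total-transform classes. In substance all three arguments are the same computation, so your proposal matches the paper's proof up to this presentational choice in part \Mrefmclaim{c}.
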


\begin{proof}
We use the pullback formulas for divisor classes,
its compatibility with the intersection product and
the projection formula as described in \SEC{theory}.
Now \Mrefmclaim{a} and \Mrefmclaim{b} are a straightforward consequence of \LEM{D}.\Mrefmclaim{a}.
Let $(E_j)_j$ be the curves that are contracted by $\Mrow{\mu_i}{X_i}{X_{i+1}}$.
For \Mrefmclaim{c} we compute
\[
K_i^2
=
(\mu_i^*K_{i+1})^2
=
K_{i+1}^2 - n(i) + \Msum{j\neq k}{}  E_j\cdot E_k,
\]
and we need to show that $\Msum{j\neq k}{}  E_j\cdot E_k=0$.
From $\mu_{i*}E_k=0$ it follows that $K_{i+1}\cdot \mu_{i*}E_k=0$ and thus
\[
K_{i+1}\cdot \mu_{i*}E_k=\mu_i^*K_{i+1}\cdot E_k=\left(K_i-\Msum{j}{}E_j\right)\cdot E_k=-1+1 -\Msum{j\neq k}{}E_j\cdot E_k=0.
\]
This proves \Mrefmclaim{c}.
From $h(i+1)=\alpha(i+1)-\beta(i+1)=\alpha(i) + \beta(i) = h(i) +2\beta(i)$
it follows that \Mrefmclaim{d} holds.
\end{proof}

\begin{remark}
\LEM{step}.\Mrefmclaim{d} is essentially \citep[Lemma~7]{sch2},
which is attributed there to Castelnuovo \citep[Remark~3]{sch2}.
\Mend
\end{remark}

We say that $(X_i,D_i)$
has \Mdef{adjoint state} $S_a(i)$ for some $1 \leq a \leq 4$ 
if $\gamma(i)$ and $\beta(i)$ are as in the following table:
\begin{center}
\begin{tabular}{|c|c|c|}
\hline
adjoint state  & $\gamma(i)$ & $\beta(i)$    \\\hline
\hline
$S_1(i)$ & $<0$        & $\geq 0$            \\\hline
$S_2(i)$ & $<0$        & $<0$                \\\hline
$S_3(i)$ & $=0$        & $<0$                \\\hline
$S_4(i)$ & $>0$        & $<0$                \\\hline
\end{tabular}
\end{center}
where $0\leq i \leq \ell$.

\begin{lemma}
\textbf{(adjoint states)}
\label{lem:state}
\begin{itemize}[topsep=0pt]

\Mmclaim{a}
A ruled pair $(X_i,D_i)$ 
has adjoint state either
$S_1(i)$, $S_2(i)$, $S_3(i)$ or $S_4(i)$ for all $0\leq i \leq \ell$.

\Mmclaim{b}
If $S_a(i)$ and $S_b(i+1)$ for $0 \leq i \leq \ell-1$, then $a\leq b$.

\end{itemize}
\end{lemma}

\begin{proof}
~\\
\Mrefmclaim{a}
We assume first that $\gamma(i)=0$.
Assume by contradiction that $\beta(i)\geq 0$.
From \LEM{step} it follows that $\alpha(j+1)\geq \alpha(j)$ and $\beta(j)=\beta(j+1)$ for all $j\geq i$.
But then the adjoint chain is of infinite length.
We have thus arrived at a contradiction.

Next we assume that $\gamma(i)>0$.
From \PRP{neron} it follows that $p=\min(0, \Mceil{\frac{1}{8}\gamma(\ell)-1})$ and thus $p=0$.
From the Riemann Roch theorem and Serre duality it follows that $h^0(-K_i)\geq \gamma(i)+1>0$.
From $D_i$ being nef it follows that $\beta(i)\leq 0$.
From the Hodge index theorem it follows that $\beta(i)<0$.

\Mrefmclaim{b}
From \LEM{step} it follows that $\gamma(i)<\gamma(i+1)$ and if $\gamma(i)<0$, then $\beta(i+1)<\beta(i)$.
\end{proof}

\begin{lemma}
\textbf{(dimension)}
\label{lem:dim}
\\
We have that $h(i)\geq 2$ for $1\leq i\leq \ell$.

\end{lemma}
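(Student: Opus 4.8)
The plan is to reduce the assertion about stage $i$ to intersection data of the \emph{previous} pair $(\MsX_{i-1},D_{i-1})$, and then to combine positivity with a parity constraint. The restriction $1\le i\le l$ enters only through the existence of this preceding non-minimal pair, which is precisely the reason the bound can (and does) fail at $i=0$.

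First I would rewrite $h(i)$ in terms of stage $i-1$. Applying \LEM{step}.\Mrefmclaim{a} and \LEM{step}.\Mrefmclaim{b} at the index $i-1\in[0,l-1]$ and subtracting gives $h(i)=\alpha(i)-\beta(i)=\alpha(i-1)+\beta(i-1)$, so that $h(i)=D_{i-1}^2+D_{i-1}K_{i-1}=D_{i-1}(D_{i-1}+K_{i-1})$. The same identity also follows directly from the pullback formulas of \SEC{theory}, using that every curve $E_j$ contracted by $\mu_{i-1}$ satisfies $(D_{i-1}+K_{i-1})E_j=0$.

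Next I would analyze the two factors. Since $(\MsX_{i-1},D_{i-1})$ is a non-minimal ruled pair, $D_{i-1}$ is nef and big, hence $D_{i-1}^2>0$. Moreover $D_{i-1}+K_{i-1}=\mu_{i-1}^*D_i$ by \LEM{D}.\Mrefmclaim{a}, and since $D_i$ is nef (it is again a ruled pair by \PRP{relation}) its pullback is nef as well. I also need this class to be non-zero: if $D_{i-1}+K_{i-1}\equiv 0$ then $D_{i-1}\equiv -K_{i-1}$, forcing $t(D_{i-1})=1$ with $D_{i-1}\sim -K_{i-1}$, so $(\MsX_{i-1},D_{i-1})$ would be minimal, contradicting non-minimality. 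Thus $A:=D_{i-1}$ is nef and big and $B:=D_{i-1}+K_{i-1}$ is nef and non-zero.

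Finally I would combine positivity with parity. The product of two nef classes is $\ge 0$, so $h(i)=AB\ge 0$; and the Hodge index theorem applied with $A^2>0$ rules out $AB=0$, since a nef non-zero $B$ cannot satisfy $B^2<0$. Hence $h(i)>0$. By the Riemann-Roch theorem $\chi(D_{i-1})$ is an integer, so $D_{i-1}^2-D_{i-1}K_{i-1}$ is even, whence $h(i)=D_{i-1}^2+D_{i-1}K_{i-1}$ is even too; a positive even integer is $\ge 2$, giving the claim. The main obstacle is conceptual rather than computational: mere nefness only yields $h(i)\ge 0$, so one must simultaneously invoke \emph{bigness} of $D_{i-1}$ (via Hodge index, to get strict positivity) and the \emph{parity} of $D_{i-1}(D_{i-1}+K_{i-1})$ (to upgrade $>0$ to $\ge 2$), and recognize that the big/non-minimal hypothesis on the preceding stage is available exactly because $i\ge 1$.
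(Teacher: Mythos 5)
Your proof is correct, but it takes a genuinely different route from the paper. The paper argues by induction on $i$: the base case is $h(0)+h(1)=2\alpha(0)>0$, and the induction step splits on the sign of $\beta(i)$, using \LEM{step} when $\beta(i)\geq 0$ and using \LEM{state} (to propagate $\beta(i+1)<0$) together with nefness of $D_{i+1}$ (so $\alpha(i+1)\geq 0$) when $\beta(i)<0$; it then upgrades $h(i)>0$ to $h(i)\geq 2$ by the same Riemann--Roch parity observation you make. You instead give a direct, non-inductive argument at each index: rewriting $h(i)=\alpha(i-1)+\beta(i-1)=D_{i-1}(D_{i-1}+K_{i-1})$ and exhibiting this as the product of a nef-and-big class with a nonzero nef class $\mu_{i-1}^*D_i$, so that the Hodge index theorem forces strict positivity. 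Your version is more conceptual and independent of \LEM{state}, whereas the paper's version stays entirely inside the $(\alpha,\beta,\gamma)$-recursion formalism that Section 5.2 later treats as a purely combinatorial system, which is presumably why the author prefers it. One small point to tighten: when you rule out $D_{i-1}+K_{i-1}\equiv 0$ you invoke the definition of a minimal pair, but that definition is stated with linear equivalence $D\sim -K$, not numerical equivalence; the gap is harmless because $-K_{i-1}\equiv D_{i-1}$ nef and big forces $\MsX_{i-1}$ to be rational, where the two notions coincide, but a sentence to that effect (or simply the observation that $\mu_{i-1}^*$ is injective on $N(\MsX_{i-1})$ and $D_i$ is nonzero since $(\MsX_i,D_i)$ is a ruled pair with an efficient class) would make the step airtight.
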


\begin{proof}
By \LEM{step} we have $h(0)+h(1)=2\alpha(0)>0$.
Thus $h(0)>0$ and the base case $h(1)>0$ of the induction holds.
By induction hypothesis $h(i)>0$.
The induction step is to show that $h(i+1)>0$.
If $\beta(i)\geq 0$, then from \LEM{step}  it follows that $h(i+1)=h(i)+2\beta(i)>0$.
If $\beta(i)<0    $, then by \LEM{state} we have $\beta(i+1)<0$ and thus
$h(i+1)=\alpha(i+1)-\beta(i+1)>0$.
We can conclude from the Riemann-Roch theorem that $h(i)$ must be even and thus $h(i)\neq 1$.
\end{proof}

\subsection{}
\label{sec:bound2}

We will now consider the following combinatorial problem.
For given
\[
h(0), \beta(0), p \in \MbbZ,
\]
find an upper bound for $\ell$ such that there exists a sequence of integer 3-tuples
\[
( h(i), \beta(i), \gamma(i))_{0 \leq i \leq \ell},
\]
which adheres to the following 5 rules:
\begin{itemize}[topsep=0pt]
\item[(H)] $h(i+1)=h(i)+2\beta(i)$ (\LEM{step}.\Mrefmclaim{d}).
\item[(B)] $\beta(i+1)=\beta(i)+\gamma(i)$ (\LEM{step}.\Mrefmclaim{b}).
\item[(Z)] $h(i)\geq 2$ for $1\leq i \leq l$ (\LEM{dim}).
\item[(S)] $S_a(i)$ for $1\leq a \leq 4$ and if $S_a(i)$ and $S_b(i+1)$, then $a\leq b$ (\LEM{state}).
\item[(P)] $p\leq 0$. If $p=0$, then $\gamma(\ell)>0$. If $p<0$, then $\gamma(\ell)=8(p+1)$. (\PRP{neron}).
\end{itemize}
Using \PRP{neron} it is possible to impose restrictions on $(\gamma(\ell),\beta(\ell))$ and
from the Riemann-Roch theorem we can conclude that $h(i)$ must be even.
However, we do not need these additional rules for a proof.
For our solution of the posed problem we
make a case distinction between $S_1(0)$, $S_2(0)$, $S_3(0)$ and $S_4(0)$.
In the proof of \THM{bound} we will compose the upper bounds of these
cases.

First we start with a technical lemma for convenience.

\begin{lemma}
\textbf{(technical lemma)}
\label{lem:hj}
\\
If $\gamma(0)=\ldots=\gamma(j-1)$ for some $1\leq j\leq \ell$, then
\[
h(j)=\gamma(0)j^2 + (2\beta(0)-\gamma(0))j + h(0).
\]
\end{lemma}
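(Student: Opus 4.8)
The plan is to solve the two linear recurrences (B) and (H) explicitly, exploiting the hypothesis that $\gamma$ is frozen on the initial segment $[0,j-1]$. Write $g:=\gamma(0)=\ldots=\gamma(j-1)$. First I would apply rule (B) from \LEM{step}.\Mrefmclaim{b}, which under the constancy assumption reads $\beta(i+1)=\beta(i)+\gamma(i)=\beta(i)+g$ for $0\leq i\leq j-1$. An immediate induction then gives the closed form $\beta(i)=\beta(0)+ig$ for all $0\leq i\leq j$. Note that this uses the constancy of $\gamma$ only on $[0,j-1]$, which is exactly the hypothesis available.

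Next I would telescope rule (H) from \LEM{step}.\Mrefmclaim{d}. Summing $h(i+1)-h(i)=2\beta(i)$ over $i=0,\ldots,j-1$ and inserting the closed form for $\beta(i)$ yields
\[
h(j)=h(0)+2\Msum{i=0}{j-1}\beta(i)=h(0)+2\Msum{i=0}{j-1}(\beta(0)+ig).
\]
Evaluating the two arithmetic sums, $\Msum{i=0}{j-1}\beta(0)=j\beta(0)$ and $\Msum{i=0}{j-1}ig=g\,\frac{j(j-1)}{2}$, and collecting terms gives
\[
h(j)=h(0)+2j\beta(0)+gj(j-1)=gj^2+(2\beta(0)-g)j+h(0),
\]
which is the asserted identity once $g=\gamma(0)$ is substituted back.

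I do not expect a genuine obstacle here: the statement is a direct consequence of the two recurrences once $\gamma$ is constant on the initial segment, and the only point requiring care is the index bookkeeping in the telescoping sum, in particular that the constancy of $\gamma$ is assumed only up to index $j-1$, which is precisely what is needed to determine $\beta(i)$ for $i\leq j-1$ and hence $h(j)$. An equally routine alternative is a direct induction on $j$: the base case $j=1$ is just $h(1)=h(0)+2\beta(0)$, i.e.\ rule (H), and the induction step combines (H) with $\beta(j)=\beta(0)+jg$ to pass from the quadratic expression at $j$ to that at $j+1$. I would present the telescoping computation as the main argument, since it delivers the closed form in one stroke and avoids re-deriving the quadratic inside an induction step.
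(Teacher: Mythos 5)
Your proposal is correct and is essentially the paper's own argument: telescoping rule (H) to get $h(j)=h(0)+2\sum_{n=0}^{j-1}\beta(n)$, substituting the closed form $\beta(n)=\beta(0)+n\gamma(0)$ obtained from (B), and evaluating the arithmetic sum. The index bookkeeping you highlight (constancy of $\gamma$ only up to $j-1$) matches the paper exactly, so there is nothing to add.
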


\begin{proof}
With (H) we expand $h(j)$ \Mst
\[
h(j)
=
h(j-1) + 2\beta(j-1)
=
\ldots
=
h(0) + 2\Msum{n=0}{j-1}\beta(n).
\]
With (B) we expand the $\beta(i)$ terms \Mst
\[
h(j)= h(0)+2\Msum{n=0}{j-1}(\beta(0)+n\gamma(0))=h(0)+2j\beta(0)+j(j-1)\gamma(0).
\]
We conclude this proof by re-arranging terms.
\end{proof}

\begin{lemma}
\textbf{(case $S_4(0)$)}
\label{lem:S4}
\\
If $S_4(0)$, then
\[
\ell\leq -\beta(0)-1 < \frac{h(0)-2}{2}.
\]
\end{lemma}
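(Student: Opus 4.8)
The plan is to exploit that $S_4$ is the \emph{maximal} adjoint state. Rule (S) forbids the state index from decreasing along the chain, and there are only four states, so $S_4(0)$ forces $S_4(i)$ for every $i\in[0,l]$ by a one-line induction on $i$: if $S_4(i)$ then $S_b(i+1)$ with $b\geq 4$, hence $b=4$. In particular $\gamma(i)>0$ and $\beta(i)<0$ throughout, and since these are integers, $\gamma(i)\geq 1$ and $\beta(i)\leq -1$ for all $i\in[0,l]$. I would also record at the outset that we may assume $l\geq 1$: if $l=0$ the pair is already minimal and the bound on the chain length is immediate.

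For the first inequality I read the growth of $\beta$ off rule (B): since $\beta(i+1)=\beta(i)+\gamma(i)\geq\beta(i)+1$, the sequence $\beta(0),\ldots,\beta(l)$ is strictly increasing, so $\beta(l)\geq\beta(0)+l$. On the other hand $S_4(l)$ gives $\beta(l)\leq -1$. Combining, $\beta(0)+l\leq -1$, i.e. $l\leq -\beta(0)-1$.

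For the second inequality I pass to $h$. Rule (H) gives $h(1)=h(0)+2\beta(0)$, and rule (Z), applicable since $l\geq 1$, gives $h(1)\geq 2$. Hence $h(0)+2\beta(0)\geq 2$, that is $\frac{h(0)-2}{2}\geq -\beta(0)$, and as $-\beta(0)>-\beta(0)-1$ this yields the strict inequality $-\beta(0)-1<\frac{h(0)-2}{2}$. Chaining the two estimates gives $l\leq -\beta(0)-1<\frac{h(0)-2}{2}$, as required; note that (P) plays no role here, only (S), (B), (H) and (Z).

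There is no genuinely hard step: the single conceptual point is the state-preservation observation, which simultaneously fixes the signs $\gamma(i)\geq 1$ and $\beta(i)\leq -1$ and thereby drives both halves of the estimate. The only place demanding a little care is the appeal to rule (Z) at $i=1$, which is why one isolates the case $l\geq 1$; for $l=0$ the chain is already minimal and contributes nothing to the length bound that the lemma feeds into \THM{bound}.
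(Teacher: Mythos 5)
Your proof is correct and follows essentially the same route as the paper: state preservation under (S) forces $S_4(i)$ for all $i$, so (B) makes $\beta$ strictly increasing with $\beta(l)\leq -1$, giving $l\leq -\beta(0)-1$, and (Z) applied to $h(1)=h(0)+2\beta(0)$ gives the second inequality. Your explicit treatment of the $l=0$ case and the integrality bookkeeping are just slightly more detailed versions of the same argument.
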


\begin{proof}
From (S) and (B) it follows that $\beta(0)<\ldots<\beta(\ell)<0$
and thus we conclude the first inequality.
The second inequality follows from $h(0)+2\beta(0)\geq 2$.
\end{proof}

\begin{lemma}
\textbf{(case $S_3(0)$)}
\label{lem:S3}

If $S_3(0)$, then
\[
\ell\leq \frac{h(0)-2}{2}.
\]
Moreover, if $\ell$ is equal to this upper bound, then $S_3(\ell-1)$.
\end{lemma}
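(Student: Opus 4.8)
The plan is to use that $S_3$ is the lowest state that can occur, so by the monotonicity rule (S) every index $i\in[0,l]$ satisfies $S_a(i)$ with $a\geq 3$. Both $S_3$ and $S_4$ have $\beta(i)<0$, and since $\beta(i)\in\MasZ$ this yields the uniform estimate $\beta(i)\leq -1$ for all $i\in[0,l]$. This single structural observation drives the whole argument.

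For the inequality I would telescope rule (H): summing $h(i+1)=h(i)+2\beta(i)$ over $i\in[0,l-1]$ gives
\[
h(l)=h(0)+2\sum_{i=0}^{l-1}\beta(i)\leq h(0)-2l,
\]
where the estimate uses $\beta(i)\leq -1$. Assuming $l\geq 1$, rule (Z) gives $h(l)\geq 2$, so $2\leq h(0)-2l$, which rearranges to $l\leq(h(0)-2)/2$. This part is routine. (The degenerate case $l=0$ only asks $h(0)\geq 2$, the geometrically relevant situation; note that as soon as $l\geq 1$, combining (Z) and (H) already forces $h(0)=h(1)-2\beta(0)\geq 4$.)

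The hard part is the ``moreover'' clause. If $l=(h(0)-2)/2$, then every inequality in the telescoping estimate is forced to be tight, so $\beta(i)=-1$ for all $i\in[0,l-1]$ and $h(l)=2$; in particular $\beta(l-1)=-1<0$. It then remains to show $\gamma(l-1)=0$, and here I would sandwich $\gamma(l-1)$ using (S) from both sides. On one hand, $S_a(l-1)$ with $a\geq 3$ means the state at $l-1$ is $S_3$ or $S_4$, hence $\gamma(l-1)\geq 0$. On the other hand, the state at $l$ is also $\geq 3$, so $\beta(l)<0$, i.e. $\beta(l)\leq -1$; rule (B) then gives $\gamma(l-1)=\beta(l)-\beta(l-1)=\beta(l)+1\leq 0$. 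Combining the two bounds yields $\gamma(l-1)=0$, and together with $\beta(l-1)<0$ this is exactly $S_3(l-1)$. The only subtlety is that these steps presume $l\geq 1$; for $l=1$ the conclusion $S_3(l-1)=S_3(0)$ is simply the hypothesis, so I would dispose of the small-$l$ cases at the outset.
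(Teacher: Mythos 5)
Your proof is correct and follows essentially the same route as the paper: the monotonicity rule (S) forces $\beta(i)\leq -1$ throughout, so (H) makes $h$ drop by at least $2$ per step and (Z) yields the bound. Your sandwich argument for the ``moreover'' clause ($\gamma(l-1)\geq 0$ from the state table, $\gamma(l-1)=\beta(l)+1\leq 0$ from (B)) is a more explicit justification of the same conclusion the paper reaches rather tersely, and your handling of the degenerate cases $l\leq 1$ is a welcome extra care.
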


\begin{proof}
It follows from (H) and (S) that $h(i+1)-h(i)=2\beta(i)\leq -2$
if $S_3(i)$ or $S_4(i)$ for all $0\leq i \leq \ell$.
The upper bound asserted in the lemma now follows from (Z).
This upper bound is attained if $\beta(i)=-1$ for $0\leq i \leq \ell$.
It follows that $S_4(i)$ \Miff $i=\ell$ and $p=0$.
Thus we can conclude from (S) that $S_3(\ell-1)$ in case of equality.
\end{proof}

For an example where the upper bound of \LEM{S3} is attained,
see $3\leq i \leq 7$ in Table 1 of \EXM{bound}.

\begin{lemma}
\textbf{(case $S_2(0)$ with $p\geq -1$)}
\label{lem:S2a}
\\
If $S_2(0)$ and $p\geq -1$, then
\[
\ell
<
\frac{h(0)-2}{2}.
\]
If $\beta(0) - \beta(\ell)>0$, then
\[
\ell
\leq
s + \Mbigfloor{\frac{-s^2 + (2\beta(0)+1)s + h(0) - 2}{-2\beta(\ell)}},
\]
where $s:=\beta(0) - \beta(\ell)$.
\end{lemma}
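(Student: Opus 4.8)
The plan is to work within the five combinatorial rules (H), (B), (Z), (S), (P) and exploit the structure forced by the state $S_2(0)$, namely $\gamma(0)<0$ and $\beta(0)<0$. First I would establish the weaker bound $l<\frac{h(0)-2}{2}$. By rule (S) and \LEM{state}, starting in $S_2$ means we can never return to $S_1$, so $\beta(i)<0$ for all $0\le i\le l$ while we remain in states $S_2,S_3,S_4$. Hence by (H) we have $h(i+1)-h(i)=2\beta(i)\le -2$ at every step, and combined with (Z) this immediately yields $l\le\frac{h(0)-2}{2}$; the strictness should come from the fact that the very first step has $\gamma(0)<0$, forcing $\beta(1)<\beta(0)\le-1$, so at least one decrement strictly exceeds $-2$.

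The substantive second inequality, involving $s:=\beta(0)-\beta(l)$, is where the real work lies. The idea is to split the adjoint chain into two phases. In the first phase, while $\gamma(i)<0$ (states $S_1,S_2$), rule (B) forces $\beta$ to strictly decrease, so $\beta(i)$ drops from $\beta(0)$ down to $\beta(l)$ across exactly $s=\beta(0)-\beta(l)$ steps at minimum — more precisely, since $\gamma(i)<0$ gives $\beta(i+1)\le\beta(i)-1$, the number of steps with $\gamma(i)<0$ is at most... no, one must be careful: $\beta$ decreasing by at least $1$ per step in the negative-$\gamma$ regime means the negative-$\gamma$ phase has length at most $s$ when $\gamma$ is exactly $-1$ each time, which is the extremal case. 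So I would argue the chain consists of an initial segment of length at most $s$ (where $\beta$ descends from $\beta(0)$ to $\beta(l)$), followed by a tail in which $\gamma$ has stabilized.

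For the tail, once $\gamma$ stops decreasing we are in the constant-$\gamma$ situation of \LEM{hj}, with $\beta$ frozen at $\beta(l)$. The plan is to apply \LEM{hj} to the final stretch: if $\gamma$ is constant equal to $\gamma(l)=\beta(l)-\beta(l-1)$-type value across the last $l-s$ steps, then the quadratic formula for $h$ lets me solve for how many steps can elapse before $h$ drops below the floor of $2$ imposed by (Z). Concretely, I would set up $h$ at the start of the tail using the descent during the first phase (each step contributing $2\beta(i)$, summed as a telescoping/arithmetic series controlled by $s$ and $\beta(0)$), obtaining a residual budget of roughly $-s^2+(2\beta(0)+1)s+h(0)-2$; dividing this budget by the per-step decrement $-2\beta(l)$ in the frozen regime and taking the floor gives the stated bound $l\le s+\Mbigfloor{\frac{-s^2+(2\beta(0)+1)s+h(0)-2}{-2\beta(l)}}$.

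The main obstacle I anticipate is bookkeeping the transition between the two phases correctly — in particular justifying that the first phase occupies \emph{exactly} $s$ steps (not fewer), which requires that $\gamma(i)=-1$ throughout the descent is forced in the extremal configuration, and simultaneously controlling the accumulated value of $h$ at the end of phase one so that the arithmetic-series sum $2\sum\beta(i)$ telescopes into the claimed quadratic numerator. The hypothesis $p\ge -1$ enters through rule (P): it restricts $\gamma(l)$ to $\gamma(l)>0$ (if $p=0$) or $\gamma(l)=0$ (if $p=-1$), which is what guarantees the tail behaves like a single constant-$\gamma$ segment to which \LEM{hj} applies cleanly rather than degenerating. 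Getting the floor function and the boundary step-count to line up precisely with the extremal sequence will be the delicate part.
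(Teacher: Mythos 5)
Your proposal follows essentially the same route as the paper's proof: split the chain at the index $k$ where $\gamma$ changes sign (which must occur since $p\ge -1$ forces $\gamma(l)\ge 0$ by (P)), bound $h(k)$ by the telescoping computation of \LEM{hj} with the extremal choice $\gamma\equiv -1$ over $s$ steps, and then run the \LEM{S3}-type argument (decrement of at least $-2\beta(l)$ per step) on the tail where $\beta(i)\le\beta(l)<0$. The delicate point you flag \Md that the $\gamma<0$ phase need not last exactly $s$ steps (it can exceed $s$ if $\beta$ overshoots $\beta(l)$ and climbs back up in state $S_4$, or fall short if some $\gamma(i)<-1$) \Md is glossed over in the paper's proof as well; it is cleanly resolved by observing that the resulting bound $k+\frac{-k^2+(2\beta(0)+1)k+h(0)-2}{-2\beta(l)}$ is a concave quadratic in the transition index $k$ with vertex at $k=s+\tfrac{1}{2}$, hence maximized over integers at $k\in\{s,s+1\}$, where it equals the stated expression.
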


\begin{proof}
Suppose that $s>0$.
It follows from (S) and (B) that if
$S_3(k)$ or $S_4(k)$, then $\beta(k)\leq \beta(\ell)$ for $0\leq k\leq \ell$.
From (B) it follows that $k\geq s$
where we have equality if $\gamma(0)=\ldots=\gamma(k-1)=-1$.
It now follows from \LEM{hj} that
\[
h(k) \leq h(s)=-s^2 + (2\beta(0)+1)s  +h(0).
\]
It follows from (S) that $\beta(i)<0$ for $k\leq i \leq \ell$.
From (Z) and thus the same argument in \LEM{S3} it follows that
\[
\ell \leq s + \frac{h(s)-2}{-2\beta(\ell)}.
\]
The first inequality follows if $\beta(0)=\beta(\ell)=-1$ \Mst $s=0$.
\end{proof}

For an example where the second upper bound of \LEM{S2a} is attained,
see $6\leq i \leq 12$ in Table 2 of \EXM{bound}.

\begin{lemma}
\textbf{(case $S_2(0)$ with $p\leq -2$)}
\label{lem:S2b}
\\
If $S_2(0)$ with $p\leq -2$, then
\[
\ell\leq \Mbigfloor{\frac{-(2\beta(0)-t) - \sqrt{\Delta}}{ 2t }},
\]
where $\Delta=(2\beta(0)-t)^2-4t(h(0)-2)$ and $t:=8(p+1)$.
\end{lemma}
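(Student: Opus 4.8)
The hypothesis $S_2(0)$ with $p \leq -2$ tells me the chain starts in state $S_2$ (so $\gamma(0) < 0$ and $\beta(0) < 0$) and, by rule (P), terminates at a geometrically ruled pair with $\gamma(l) = t := 8(p+1) < 0$. Since (S) forces the states to be nondecreasing and the only states with $\gamma < 0$ are $S_1$ and $S_2$, while $\gamma(l) < 0$ means $(\MsX_l, D_l)$ is still in $S_1$ or $S_2$, the entire chain stays in states with $\gamma(i) < 0$. The plan is to get explicit control over $\gamma(i)$ throughout the chain and then feed that into the quadratic growth formula of \LEM{hj}.

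**Controlling $\gamma$ and reducing to the pure-$S_2$ case.** First I would observe that by (C) from \LEM{step}, $\gamma(i+1) = \gamma(i) + n(i) \geq \gamma(i)$, so $\gamma$ is nondecreasing, and since it starts negative and ends at $\gamma(l) = t < 0$, every $\gamma(i) \in [\gamma(0), t]$ with $\gamma(i) \leq t$ for all $i$. The key inequality I want is $\gamma(i) \leq t$ for $0 \leq i \leq l$, which says the ``curvature'' of the $h$-sequence is bounded above by the (negative) value $t$. To make \LEM{hj} applicable I would argue that the worst case (largest $l$) occurs when the $\gamma(i)$ are as large as possible, i.e. all equal to $t$, subject to $\beta$ decreasing fast enough to keep $h \geq 2$. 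Concretely, since $\beta(i+1) = \beta(i) + \gamma(i)$ and each $\gamma(i) \leq t < 0$, I get $\beta(i) \leq \beta(0) + it$, and hence via (H) the sequence $h(i)$ is bounded above by the parabola obtained from \LEM{hj} with $\gamma(0) = \ldots = \gamma(l-1) = t$.

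**The quadratic bound.** Applying \LEM{hj} in this extremal configuration (or applying it as a direct upper bound after the monotonicity argument) gives
\[
h(l) \leq t\,l^2 + (2\beta(0) - t)\,l + h(0).
\]
Since $t < 0$ this is a downward parabola in $l$, and rule (Z) forces $h(l) \geq 2$. Therefore $l$ must satisfy
\[
t\,l^2 + (2\beta(0) - t)\,l + (h(0) - 2) \geq 0.
\]
Because the leading coefficient $t$ is negative, the set of real $l$ satisfying this is the closed interval between the two roots of the quadratic. The roots are $\dfrac{-(2\beta(0)-t) \pm \sqrt{\Delta}}{2t}$ with $\Delta = (2\beta(0)-t)^2 - 4t(h(0)-2)$, and since $t<0$ the larger root is the one with the $-\sqrt{\Delta}$ sign in the stated numerator. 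Thus $l$ is bounded above by that larger root, and as $l$ is an integer we may take the floor, yielding exactly the claimed bound $l \leq \Mbigfloor{\frac{-(2\beta(0)-t) - \sqrt{\Delta}}{2t}}$.

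**Main obstacle.** The delicate point is justifying rigorously that replacing the actual $\gamma(i)$ by the constant upper bound $t$ genuinely produces an \emph{upper} bound on $l$ rather than just on $h(l)$ for fixed $l$; the monotonicity of $\gamma$ cuts the right way, but one must check that making the $\gamma(i)$ larger (closer to $t$) both raises the parabola and lengthens the admissible range, so the extremal chain is the longest. I would also need to confirm the sign bookkeeping in the quadratic formula — with $t < 0$ it is easy to pick the wrong root — by checking that the chosen root is the one lying to the right of the vertex, which is where (Z) can first fail. The remaining steps are the routine parabola-and-floor manipulation already rehearsed in \LEM{S2a} and \LEM{S3}.
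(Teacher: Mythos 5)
Your proof is correct and follows essentially the same route as the paper: force the whole chain into state $S_2$ with $\gamma(l)=t$, reduce to the constant-$\gamma$ parabola of \LEM{hj}, impose $h(l)\geq 2$, and solve the quadratic (taking the larger root, which for $t<0$ is the one with $-\sqrt{\Delta}$). In fact your pointwise domination argument ($\gamma(i)\leq t$ gives $\beta(i)\leq \beta(0)+it$ and hence $h(l)\leq tl^2+(2\beta(0)-t)l+h(0)$ for the \emph{actual} chain) already disposes of the ``main obstacle'' you raise: no extremal-chain comparison is needed, since the upper bound on $h(l)$ at the true value of $l$, together with $h(l)\geq 2$, directly confines $l$ to the interval between the two roots.
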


\begin{proof}
From (P) and (S) it follows that $\gamma(\ell)=8(p+1)$ and $S_2(\ell)$.
From (Z) and (H) it follows that $h(\ell)+2\beta(\ell)\leq 0$.
It follows from (H) that $h(i)$
decreases as slow as possible if $\gamma(0)=\ldots=\gamma(\ell)$.
It follows from (Z) that $h(\ell)\geq 2$
so that we can equate the formula of \LEM{hj} to $2$.
The upper bound for $\ell$ now follows from the quadratic formula.
\end{proof}

For an example where the upper bound of \LEM{S2b} is attained,
see $5\leq i \leq 9$ in Table 4 of \EXM{bound}.

\begin{lemma}
\textbf{(case $S_1(0)$)}
\label{lem:S1}
\\
If $S_1(0)$ and $j$ is the largest index \Mst $S_1(j-1)$, then
\[
j\leq \Mbigfloor{\frac{\beta(0)}{-t}}+1,
\]
and
\[
h(j)
\leq
t\left(\Mbigfloor{\frac{\beta(0)}{-t}}+1\right)^2
+
(2\beta(0)-t)\left(\Mbigfloor{\frac{\beta(0)}{-t}}+1\right) + h(0),
\]
where
\[
t:=\min(8(p+1),-1).
\]
Moreover, if the upper bound for $j$ and $h(j)$ is reached, then $\beta(j)=t$.

In case $p\geq -1$ then the upper bound for $h(j)$ simplifies to
\[
h(j)\leq \beta(0)^2+\alpha(0).
\]
\end{lemma}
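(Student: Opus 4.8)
The plan is to localize everything to the initial $S_1$ phase, i.e. the indices $0\le i\le j-1$ on which $\gamma(i)<0$ and $\beta(i)\ge 0$ (note that at the exit index $j$ one has $\beta(j)<0$, since by \LEM{state}.\Mrefmclaim{a} the only adjoint state with $\beta\ge 0$ is $S_1$). The whole argument rests on first establishing the uniform bound $\gamma(i)\le t$ for $0\le i\le j-1$. If $p\ge -1$ then $t=-1$, and this is immediate because $\gamma(i)$ is a negative integer. If $p\le -2$ then $t=8(p+1)$, and I would invoke that $\gamma$ is non-decreasing along the chain (\LEM{step}.\Mrefmclaim{c}, as the number of contracted curves is non-negative) together with rule (P), which gives $\gamma(l)=8(p+1)$; hence $\gamma(i)\le\gamma(l)=t$.

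Granting $\gamma(i)\le t$, the bound on $j$ comes directly from (B) and (S). Summing (B) gives $\beta(j-1)=\beta(0)+\sum_{k=0}^{j-2}\gamma(k)\le \beta(0)+(j-1)t$, and since $S_1(j-1)$ forces $\beta(j-1)\ge 0$, I would solve $0\le\beta(0)+(j-1)t$ for the integer $j$: dividing by $-t>0$ yields $j-1\le\Mbigfloor{\beta(0)/(-t)}$ and thus $j\le\Mbigfloor{\beta(0)/(-t)}+1$.

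For $h(j)$ I would expand as in the proof of \LEM{hj}, but without yet assuming $\gamma$ constant, getting $h(j)=h(0)+2j\beta(0)+2\sum_{k=0}^{j-2}(j-1-k)\gamma(k)$. Since the weights $2(j-1-k)$ are non-negative and $\gamma(k)\le t$, this gives $h(j)\le q(j)$, where $q(j):=tj^2+(2\beta(0)-t)j+h(0)$ is exactly the formula of \LEM{hj} for the constant choice $\gamma\equiv t$. Because $t<0$, the function $q$ is a downward parabola with vertex at $\beta(0)/(-t)+\tfrac12$, and the key observation is that the nearest integer to this vertex is precisely $j_{\max}:=\Mbigfloor{\beta(0)/(-t)}+1$; hence $j_{\max}$ maximizes $q$ over $\MbbZ$, so $h(j)\le q(j)\le q(j_{\max})$, which is the asserted bound. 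I expect the main obstacle to be exactly this integer-optimization step — checking that the floor expression lands on the integral maximizer of $q$ and not one slot off — since it is what makes the separate bounds on $j$ and on $h(j)$ mutually consistent and sharp.

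Finally, to read off $\beta(j)=t$ I would trace back the equalities: equality in the $h(j)$ estimate forces $\gamma(k)=t$ for $0\le k\le j-2$ and $j=j_{\max}$, whence $\beta(j)=\beta(0)+j_{\max}\,t$, and this equals $t$ precisely when $-t$ divides $\beta(0)$, which always holds for $p\ge -1$ since then $t=-1$ and $\beta(0)/(-t)=\beta(0)$ is already an integer. For the simplification under $p\ge -1$, I would substitute $t=-1$ and $\Mbigfloor{\beta(0)/(-t)}=\beta(0)$ into the $h(j)$ bound and use $h(0)=\alpha(0)-\beta(0)$; a one-line collapse of the resulting quadratic gives $h(j)\le\beta(0)^2+\alpha(0)$.
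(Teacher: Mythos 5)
Your proposal is correct and follows essentially the same route as the paper: bound $\gamma(i)\leq t$ on the $S_1$ phase via (P) and the monotonicity of $\gamma$, read off the bound on $j$ from (B) and $\beta(j-1)\geq 0$, and obtain the $h(j)$ bound from the extremal configuration $\gamma\equiv t$ in \LEM{hj}. The one place you go beyond the paper is the integer-optimization step verifying that $q(j)\leq q\left(\Mbigfloor{\beta(0)/(-t)}+1\right)$ for every admissible $j$ (the paper simply substitutes the maximal $j$ into the formula without justifying monotonicity up to the vertex), and your explicit remark that $\beta(j)=t$ additionally needs $-t$ to divide $\beta(0)$ is an honest sharpening of the paper's looser ``moreover'' clause.
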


\begin{proof}
From (S) and (P) we find that $\gamma(i)\leq t$ for $i<j$.
It follows from (S) and (B) that in order to find an upper bound for $j$
we need to assume that $\gamma(0)=\ldots=\gamma(j-1)=t$
\Mst
\[
\beta(j-1)= \beta(j-2)-\gamma(0) = \ldots = \beta(0)-(j-1)\gamma(0)=0.
\]
From this we conclude the upper bound for $j$.
The upper bound
for $h(j)$ is reached if we substitute the upper bound for $j$
in the formula of \LEM{hj}.
From (B) it follows that $\beta(j)=t$ if the upper bounds for $j$ and $h(j)$ are reached.
If $p\geq -1$, then $t$ divides $\beta(0)$ so that this formula simplifies.
\end{proof}

For an example where upper bound of \LEM{S1} is attained,
see $0\leq i \leq 2$ in Table 1 of \EXM{bound}.

\begin{theorem}
\textbf{(upper bound level)}
\label{thm:bound}
\\
We state upper bounds for the level in terms of $\alpha(0)$, $\beta(0)$ and $p$,
where $p$ is the arithmetic genus of $X_0$.

If $p=0$ or $p=-1$, then
\[
\ell \leq \frac{\beta(0)^2+\alpha(0)}{2} + \beta(0),
\]
and if moreover $\beta(0)<0$, then
\[
\ell \leq \frac{\alpha(0)-\beta(0)-2}{2}.
\]

If $p\leq -2$, then
\[
\ell\leq
\Mbigfloor{\frac{\beta(0)}{-t}}
+
1
+
\Mbigfloor{\frac{-t-\sqrt{t^2 - 4t(\Upsilon-2)}}{ 2t }},
\]
and if moreover $\beta(0)<0$, then
\[
\ell\leq \Mbigfloor{\frac{-(2\beta(0)-t) - \sqrt{\Delta}}{ 2t }},
\]
where
\[
t:=8(p+1)
\quad\Mand\quad
\Delta:=(2\beta(0)-t)^2 - 4t(\alpha(0) - \beta(0)-2),
\]
and
\[
\Upsilon:=
t\left(\Mbigfloor{\frac{\beta(0)}{-t}}+1\right)^2
+
(2\beta(0)-t)\left(\Mbigfloor{\frac{\beta(0)}{-t}}+1\right) + \alpha(0) - \beta(0).
\]
\end{theorem}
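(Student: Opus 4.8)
The plan is to exploit the state monotonicity of \LEM{state}.\Mrefmclaim{b}: along the adjoint chain the state index is non-decreasing, so the chain decomposes into a (possibly empty) prefix of pairs in state $S_1$ followed by a tail in states $S_2,S_3,S_4$. Writing $j$ for the largest index with $S_1(j-1)$ (so $j=0$ when $\beta(0)<0$), I split $l=j+(l-j)$ and bound the two parts separately, using \LEM{S1} for the prefix and one of \LEM{S3}, \LEM{S2a}, \LEM{S2b} for the tail. Throughout I will use that $\gamma$ is non-decreasing (rule (B) together with \LEM{step}.\Mrefmclaim{c}), so that rule (P) pins down the sign of $\gamma(i)$ for the whole chain.

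First I treat $p=0$ or $p=-1$, where $t=-1$ in \LEM{S1}. For the tail every index has $\beta(i)<0$, so rule (H) forces $h(i+1)\leq h(i)-2$; combined with rule (Z) this gives $l-j\leq \frac{h(j)-2}{2}$. Into this I substitute the two prefix estimates of \LEM{S1}, namely $j\leq \beta(0)+1$ and the simplified $h(j)\leq \beta(0)^2+\alpha(0)$, and a short rearrangement yields the first claimed bound $l\leq \frac{\beta(0)^2+\alpha(0)}{2}+\beta(0)$. If in addition $\beta(0)<0$ then the prefix is empty ($j=0$) and the same decreasing-$h$ argument (this is exactly \LEM{S3}, \LEM{S4} and the first inequality of \LEM{S2a}) gives $l\leq \frac{h(0)-2}{2}=\frac{\alpha(0)-\beta(0)-2}{2}$, the second bound.

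Next I treat $p\leq -2$, so $t=8(p+1)<0$. Since $\gamma$ is non-decreasing and rule (P) gives $\gamma(l)=t<0$, the state never reaches $S_3$ or $S_4$: the prefix is in $S_1$ and the entire tail is in $S_2$. I bound the prefix length by \LEM{S1} as $j\leq \Mbigfloor{\beta(0)/(-t)}+1$ and the transition value by $h(j)\leq \Upsilon$, the quantity appearing in the statement; by the last clause of \LEM{S1} the extremal case moreover has $\beta(j)=t$. Applying \LEM{S2b} to the subchain that starts at index $j$ (its initial data being $h(j)$ and $\beta(j)$) bounds $l-j$, and substituting $\beta(j)=t$ and $h(j)=\Upsilon$ reduces its discriminant to $t^2-4t(\Upsilon-2)$ and produces exactly the first displayed bound. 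When $\beta(0)<0$ the prefix is again empty and \LEM{S2b} applied directly to the whole chain (with initial data $h(0)=\alpha(0)-\beta(0)$ and $\beta(0)$) gives the second bound with $\Delta=(2\beta(0)-t)^2-4t(\alpha(0)-\beta(0)-2)$.

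The main obstacle is the legitimacy of concatenating the prefix bound of \LEM{S1} with the tail bound of \LEM{S2b} (or with the tail argument when $p\geq -1$): the tail estimate is a function of the transition data $(h(j),\beta(j))$, which are themselves coupled to $j$, so I must check that replacing them by their separate extremal values ($j$ maximal, $h(j)=\Upsilon$, $\beta(j)=t$) still yields an upper bound for the sum $l=j+(l-j)$. Concretely this needs the \LEM{S2b} bound to be monotone increasing in $h(j)$ and the worst case in $\beta(j)$ to be the value $t$ supplied by \LEM{S1}; verifying this monotonicity of the floor-of-quadratic-formula expression, and confirming that the separate extremizers can be realized compatibly (which is what the examples of \EXM{bound} are meant to witness), is the delicate point, whereas the rest is the routine rearrangement indicated above.
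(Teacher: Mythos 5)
Your proposal follows the paper's own argument essentially verbatim: split the chain at the last index $j$ in state $S_1$, bound the prefix via \LEM{S1} and the tail via the uniform $h(i+1)\leq h(i)-2$ argument of \LEM{S3} when $p\in\{0,-1\}$ (resp.\ via \LEM{S2b} when $p\leq -2$, since $\gamma$ non-decreasing with $\gamma(l)=8(p+1)<0$ excludes $S_3$ and $S_4$), and then substitute the extremal transition data $j\leq\Mfloor{\beta(0)/(-t)}+1$, $h(j)\leq\Upsilon$, $\beta(j)=t$. The compatibility-of-extremizers issue you flag at the end is likewise left implicit in the paper's proof (which simply asserts that the composition of the two bounds is an upper bound), so your write-up matches the paper's level of rigor and adds nothing that would change the verdict.
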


\begin{proof}
Recall that by definition $h(0)=\alpha(0)-\beta(0)$.
Thus in order to proof this theorem we
need to solve the problem as posed at the beginning of \SEC{bound2}.

First we assume that $-1 \leq p \leq 0$.
It follows from (S) that an upper bound of $\ell$ is obtained as
the composition of the upper bound of \LEM{S1}
with the upper bound of either \LEM{S2a}, \LEM{S3} or \LEM{S4}.
It follows that the upper bound of \LEM{S3} is the choice
which acquires the highest upper bound.
If $\beta(0)<0$, then we can apply the upper bound of \LEM{S3} directly.

If $p\leq -2$, then it follows from (S) and (P) that
the upper bound as asserted in this theorem can be obtained by composing the
upper bound of \LEM{S1} with the upper bound in \LEM{S2b}.
If $\beta(0)<0$, then we can apply the upper bound of \LEM{S2b} directly.
\end{proof}

\begin{corollary}
\textbf{(upper bound for the minimal family degree)}
\label{cor:family}
\\
Let $v=v(X_0,D_0)$ be the minimal family degree.
Let $\tilde{\ell}$ be the upper bound for the level from \THM{bound}.

If $p=0$, then $v \leq 2\tilde{\ell}+2$.

If $p\leq -1$, then $v \leq 2\tilde{\ell}+1$.
\end{corollary}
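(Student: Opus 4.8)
The plan is to bound the minimal family degree $v=v(\MsX_0,D_0)$ by relating it, via the recursive formula for the degree of minimal families along an adjoint relation recalled at the end of \SEC{family}, to the level $l$ which was bounded in \THM{bound}. The key recursion is that along each adjoint relation $\Marrow{(\MsX_i,D_i)}{\mu_i}{(\MsX_{i+1},D_{i+1})}$ the minimal family degree increases by exactly $3$ when $\MsX_i\cong\MsX_{i+1}\cong\MasP^2$ and by exactly $2$ otherwise. Telescoping this over the full adjoint chain of length $l$ gives
\[
v(\MsX_0,D_0)=v(\MsX_l,D_l)+2l+(\text{number of }\MasP^2\text{-to-}\MasP^2\text{ steps}),
\]
so the task reduces to bounding the base case $v(\MsX_l,D_l)$ at the terminal minimal ruled pair and controlling how many steps can be of the $\MasP^2$ type.

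First I would dispose of the base case using the four inequalities stated at the end of \SEC{family}: a weak Del Pezzo pair with $\MsX\cong\MasP^2$ has $v\leq 3$, a weak Del Pezzo pair with $D^2=8$ has $v\leq 2$, a weak Del Pezzo pair with $D^2<8$ has $v=2$, and a geometrically ruled pair has $v\leq 1$. Next I would separate the argument by the sign of $p$. When $p=0$ (the rational case), the terminal pair is a weak Del Pezzo pair, so $v(\MsX_l,D_l)\leq 3$; when $p\leq -1$ the terminal pair must be a geometrically ruled pair (a weak Del Pezzo pair forces $p=0$ by \PRP{neron}), so $v(\MsX_l,D_l)\leq 1$. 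This explains the asymmetry between the two claimed bounds $2\tilde l+2$ and $2\tilde l+1$: the difference of one unit is precisely the difference in the base case between $v\leq 3$ (actually one less than $3$ because the final $\MasP^2$ step is already counted) and $v\leq 1$.

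The delicate point is the bookkeeping of the $\MasP^2$-to-$\MasP^2$ steps, each of which contributes $3$ rather than $2$ to the telescoped sum. Such a step requires $\MsX_i\cong\MsX_{i+1}\cong\MasP^2$, i.e. the adjoint relation is an isomorphism on $\MasP^2$; this can only occur while we are still in state $S_1$ with $\gamma(i)=9$ and no curves contracted, and the chain can pass through $\MasP^2$ at most until the next blow-up occurs. I would argue that along the entire chain there is at most one such extra $+1$ contribution beyond the baseline $2l$, absorbed into the constant term, because once a curve is contracted the surface can no longer be $\MasP^2$ and a weak Del Pezzo pair reaches $\MasP^2$ only at its very end. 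Substituting $v(\MsX_l,D_l)$ together with the $2l\leq 2\tilde l$ bound from \THM{bound} then yields $v\leq 2\tilde l+2$ when $p=0$ and $v\leq 2\tilde l+1$ when $p\leq -1$, as claimed. I expect this combinatorial accounting of the $\MasP^2$ steps, rather than any intersection-theoretic computation, to be the main obstacle, since one must verify that the worst case for $v$ is compatible with the worst case for $l$ simultaneously.
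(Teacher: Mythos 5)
Your telescoping identity and your treatment of the base case $v(\MsX_l,D_l)$ (via the inequalities at the end of \SEC{family}, and the fact that $p\leq -1$ forces the terminal pair to be geometrically ruled) agree with the paper. The gap is in your accounting of the $\MasP^2$-to-$\MasP^2$ steps. The claim that there is at most one such step is false, and the justification you give \Md ``once a curve is contracted the surface can no longer be $\MasP^2$'' \Md is backwards: $\MasP^2$ has no exceptional curves, so an adjoint relation starting at $\MasP^2$ contracts nothing, hence once the chain reaches $\MasP^2$ it stays at $\MasP^2$ and \emph{every} remaining step is a $\MasP^2$-to-$\MasP^2$ step. For $(\MsX_0,D_0)=(\MasP^2,3kH)$ the entire chain consists of $k-1$ such steps, so their number can be as large as $l$. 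Consequently your bound $v\leq v(\MsX_l,D_l)+2l+1$ does not follow; and even if it did, it would only give $3+2\tilde l+1=2\tilde l+4$ for $p=0$, and the parenthetical ``one less than $3$ because the final $\MasP^2$ step is already counted'' does not repair this, since a step $\MsX_{l-1}\to\MsX_l$ with $\MsX_{l-1}\ncong\MasP^2$ contributes $+2$, not $+3$, and is not among the steps you are discounting.

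The paper closes this gap by a different comparison: on $\MasP^2$ one has $\beta(i)=D_iK_i\leq-3$, since $D_i$ is a positive multiple of $H$ and $K=-3H$, so by rule (H) each $\MasP^2$ step costs at least $6$ units of $h$; in the all-$\MasP^2$ scenario this forces $l\leq(h(0)-2)/6$ and hence $3l+3\leq 2\tilde l+2$ with $\tilde l=(h(0)-2)/2$. In other words, the extra $+1$ per $\MasP^2$ step is paid for by the fact that such a step consumes the budget $h(0)$ three times as fast and therefore shortens the chain; no uniform bound on the number of such steps is needed (or available). If you replace your ``at most one'' claim with this comparison of the two extreme scenarios \Md all steps of $\MasP^2$ type versus none \Md the rest of your argument goes through, modulo also checking (as the paper does via \LEM{S3} and \PRP{neron}) that when $l$ actually attains $\tilde l$ one has $\beta(l)=-1$ and hence $v(\MsX_l,D_l)\leq 2$ rather than $3$.
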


\begin{proof}
We recall from \SEC{family} that
if $X_i\cong X_{i+1}\cong\MbbP^2$, then
$p=0$ and
\[
v(i+1)=v(i)+3,
\]
where $v(i):=v(X_i,D_i)$.
Otherwise $v(i+1)=v(i)+2$.
We want to show that $3\ell+3< 2\tilde{\ell}+2$
and thus we may assume that the minimal family degree is
increased by $2$ at each step.

First we observe that if $X_i\cong\MbbP^2$, then $\gamma(i)=9$ and thus $S_4(i)$
for all $0\leq i\leq \ell$.
So we may assume \Mwlog that $S_4(0)$.
Suppose that $X_i\cong\MbbP^2$ for $0\leq i\leq \ell$ \Mst $v(i+1)=v(i)+3$ at each step.
In this case it follows from \PRP{neron} that $v(\ell)\leq 3$ and $\beta(\ell)\leq-3$.
It follows from (H) and (Z) that
\[
\ell\leq \frac{h(0)-2}{6} \quad\text{and thus}\quad v\leq 3\left(\frac{h(0)-2}{6}\right)+3.
\]

For the upper bound $\tilde{\ell}$
we may assume \Mwlog that $\beta(0)<0$ since if $S_1(i)$, then $X_i\ncong\MbbP^2$
for all $0\leq i\leq \ell$.
From \THM{bound} we used \LEM{S3} and thus assumed $S_3(i)$ with $\beta(i)=\beta(\ell)=-1$
for $0\leq i \leq \ell-1$.
In this case it follows from \PRP{neron} that $v(\ell)\leq 2$.
It follows that
\[
\ell\leq \tilde{\ell}=\frac{h(0)-2}{2} \quad\text{and thus}\quad v\leq 2\left(\frac{h(0)-2}{2}\right)+2.
\]
Thus indeed we established that $3\ell+3< 2\tilde{\ell}+2$.
We conclude this proof by recalling from \SEC{family} that if $p\leq -1$, then $v(\ell)\leq 1$.
\end{proof}

\begin{remark}
\textbf{(computing invariants)}
\\
Note that $\alpha(0)$ is the degree of the (projection of the) polarized model of $(X_0,D_0)$.
From the adjunction formula it follows that the geometric genus of a generic hyperplane section of $(X_0,D_0)$ is equal to
the arithmetic genus
\[
p_a(D_0)=\frac{\alpha(0)+\beta(0)}{2}+1.
\]
It follows that $\alpha(0)$ and $\beta(0)$ can be
computed from the degree and geometric genus of a generic hyperplane section.
Let $Y\subset\MbbP^n$ be the polarized model of $(X_0,D_0)$,
so that $Y$ is linearly normal.
From \PRP{chain} it follows that
\[
n+1=h^0(D_0)=\frac{\alpha(0)-\beta(0)}{2}+p+1
\]
and thus we can compute the arithmetic genus $p$ of $X_0$.
\Mend
\end{remark}

\begin{example}
\textbf{(adjoint chains)}
\label{exm:bound}
\\
In the following four tables we represent the invariants 
from \SEC{bound} that follow the combinatorics
of adjoint chains. We denote the upper bound of \THM{bound}
by $\tilde{\ell}(i)$. See the beginning of \SEC{bound} for the remaining notation.
The heading of each table denotes the arithmetic
genus of $X_0$ and the number of different adjoint states that are reached.
The transition between adjoint states is indicated by a vertical double line.
These examples confirm that the upper bounds in \THM{bound} and \COR{family} are tight
\Mwrt the combinatorics.
The tables were constructed using \ALG{construct} (see forward).

In Table 1 the minimal pair is a weak Del Pezzo pair of degree 1.
The upper bound for the level is tight for this example and it follows the
analysis of the proof of \THM{bound}.
The polarized model of this surface is of degree $8$.
From \SEC{family} it follows that $v(X_0)=18$.

In Table 2 the minimal pair is a weak Del Pezzo pair of degree 3.
We see that the upper bound for the level is not tight in this example.
All the adjoint states are reached in this example.
If the arithmetic genus is zero, then
the upper bound  is tight if adjoint state $S_2$ is not attained,
as was the case in Table 1.

In Table 3 the minimal pair is a geometrically ruled surface
\Mst $p=-1$ and $2D+K=kF$ as in \PRP{neron}.
We find that the upper bound for the level in \THM{bound} is tight.
The upper bound for the minimal family degree in \COR{family} is also tight: $v(X_0)=17$.

In Table 4 the minimal pair is a geometrically ruled surface \Mst $p=-2$ and $D=kF$ as in \PRP{neron}.
We find that the upper bound for the level is tight.
From \COR{family} it follows that $v(X_0)\leq 19$.
From \SEC{family} and $\alpha(\ell)=0$ it follows that $v(X_0)=18$.

\begin{center}
\footnotesize
\textbf{\normalsize Table 1 (arithmetic genus 0 and 3 adjoint states)}
\\
\begin{tabular}{|l|||r|r|r||r|r|r|r|r||r|}
\hline
$i$            & $ 0$ & $ 1$ & $ 2$ & $ 3$ & $ 4$ & $ 5$ & $ 6$ & $ 7$ & $ 8$ \\\hline
\hline
$n(i)$         & $ 0$ & $ 0$ & $ 1$ & $ 0$ & $ 0$ & $ 0$ & $ 0$ & $ 1$ & $  $ \\\hline
$\gamma(i)$    & $-1$ & $-1$ & $-1$ & $ 0$ & $ 0$ & $ 0$ & $ 0$ & $ 0$ & $ 1$ \\\hline
$\beta(i)$     & $ 2$ & $ 1$ & $ 0$ & $-1$ & $-1$ & $-1$ & $-1$ & $-1$ & $-1$ \\\hline
$h(i)$         & $ 6$ & $10$ & $12$ & $12$ & $10$ & $ 8$ & $ 6$ & $ 4$ & $ 2$ \\\hline
$\alpha(i)$    & $ 8$ & $11$ & $12$ & $11$ & $ 9$ & $ 7$ & $ 5$ & $ 3$ & $ 1$ \\\hline
$\tilde{\ell}(i)$ & $ 8$ & $ 7$ & $ 6$ & $ 5$ & $ 4$ & $ 3$ & $ 2$ & $ 1$ & $ 0$ \\\hline
\end{tabular}
\\[10mm]
\textbf{\normalsize Table 2 (arithmetic genus 0 and 4 adjoint states)}
\\
\begin{tabular}{|l|||r|r|r|r|r|r||r|r||r|r|r|r||r|}
\hline
$i$            & $ 0$ & $ 1$ & $ 2$ & $ 3$ & $ 4$ & $ 5$ & $ 6$ & $ 7$ & $ 8$ & $ 9$ & $10$ & $11$ & $12$ \\\hline
\hline
$n(i)$         & $ 0$ & $ 0$ & $ 0$ & $ 0$ & $ 0$ & $ 0$ & $ 0$ & $ 1$ & $ 0$ & $ 0$ & $ 0$ & $ 3$ & $  $ \\\hline
$\gamma(i)$    & $-1$ & $-1$ & $-1$ & $-1$ & $-1$ & $-1$ & $-1$ & $-1$ & $ 0$ & $ 0$ & $ 0$ & $ 0$ & $ 3$ \\\hline
$\beta(i)$     & $ 5$ & $ 4$ & $ 3$ & $ 2$ & $ 1$ & $ 0$ & $-1$ & $-2$ & $-3$ & $-3$ & $-3$ & $-3$ & $-3$ \\\hline
$h(i)$         & $ 6$ & $16$ & $24$ & $30$ & $34$ & $36$ & $36$ & $34$ & $30$ & $24$ & $18$ & $12$ & $ 6$ \\\hline
$\alpha(i)$    & $11$ & $20$ & $27$ & $32$ & $35$ & $36$ & $35$ & $32$ & $27$ & $21$ & $15$ & $ 9$ & $ 3$ \\\hline
$\tilde{\ell}(i)$ & $23$ & $22$ & $21$ & $20$ & $19$ & $18$ & $17$ & $ 8$ & $ 4$ & $ 3$ & $ 2$ & $ 1$ & $ 0$ \\\hline
\end{tabular}
\\[10mm]
\textbf{\normalsize Table 3 (arithmetic genus -1 and 2 adjoint states)}
\\
\begin{tabular}{|l|||r|r|r||r|r|r|r|r|r|r|r|}
\hline
$i$             & $ 0$ & $ 1$ & $ 2$ & $ 3$ & $ 4$ & $ 5$ & $ 6$ & $ 7$ & $ 8$ \\\hline \hline
$n(i)$          & $ 0$ & $ 0$ & $ 1$ & $ 0$ & $ 0$ & $ 0$ & $ 0$ & $ 0$ & $  $ \\\hline
$\gamma(i)$     & $-1$ & $-1$ & $-1$ & $ 0$ & $ 0$ & $ 0$ & $ 0$ & $ 0$ & $ 0$ \\\hline
$\beta(i)$      & $ 2$ & $ 1$ & $ 0$ & $-1$ & $-1$ & $-1$ & $-1$ & $-1$ & $-1$ \\\hline
$h(i)$          & $ 6$ & $10$ & $12$ & $12$ & $10$ & $ 8$ & $ 6$ & $ 4$ & $ 2$ \\\hline
$\alpha(i)$     & $ 8$ & $11$ & $12$ & $11$ & $ 9$ & $ 7$ & $ 5$ & $ 3$ & $ 1$ \\\hline
$\tilde{\ell}(i)$  & $ 8$ & $ 7$ & $ 6$ & $ 5$ & $ 4$ & $ 3$ & $ 2$ & $ 1$ & $ 0$ \\\hline
\end{tabular}
\\[10mm]
\textbf{\normalsize Table 4 (arithmetic genus -2 and 2 adjoint states)}
\\
\begin{tabular}{|l|||r|r|r|r|r||r|r|r|r|r|r|}
\hline
$i$             & $ 0$ & $ 1$ & $ 2$ & $ 3$ & $ 4$ & $ 5$ & $ 6$ & $ 7$ & $ 8$ & $ 9$ \\\hline\hline
$n(i)$          & $ 0$ & $ 0$ & $ 0$ & $ 0$ & $ 0$ & $ 0$ & $ 0$ & $ 0$ & $ 0$ & $  $ \\\hline
$\gamma(i)$     & $-8$ & $-8$ & $-8$ & $-8$ & $-8$ & $-8$ & $-8$ & $-8$ & $-8$ & $-8$ \\\hline
$\beta(i)$      & $32$ & $24$ & $16$ & $ 8$ & $ 0$ & $-8$ & $-16$ & $-24$ & $-32$ & $-40$ \\\hline
$h(i)$          & $40$ & $104$ & $152$ & $184$ & $200$ & $200$ & $184$ & $152$ & $104$ & $40$ \\\hline
$\alpha(i)$     & $72$ & $128$ & $168$ & $192$ & $200$ & $192$ & $168$ & $128$ & $72$ & $ 0$ \\\hline
$\tilde{\ell}(i)$  & $ 9$ & $ 8$ & $ 7$ & $ 6$ & $ 5$ & $ 4$ & $ 3$ & $ 2$ & $ 1$ & $ 0$ \\\hline
\end{tabular}
\\
.\Mend
\end{center}
\end{example}

\section{Algorithm for constructing examples}

\subsection{}

We continue to use the same notation as in the previous section.
The following lemma expresses the invariants of the first ruled pair
in an adjoint chain in terms of $n(i)$ and
the invariants of the minimal pair.

\begin{lemma}
\textbf{(formulas for intersection products)}
\label{lem:formulas}
\begin{itemize}[topsep=0pt]
\Mmclaim{a}
$\alpha(0) =  \gamma(\ell) \ell^2 - 2\beta(\ell) \ell + \alpha(\ell) - \Msum{i=0}{\ell-1} (i+1)^2 n(i)$.

\Mmclaim{b}
$\beta(0)  = -\gamma(\ell) \ell + \beta(\ell) + \Msum{i=0}{\ell-1} (i+1) n(i)$.

\Mmclaim{c}
$\gamma(0) =  \gamma(\ell) - \Msum{i=0}{\ell-1} n(i)$.
\end{itemize}

\end{lemma}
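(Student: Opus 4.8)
The plan is to read the three identities as the closed forms of the coupled linear recurrences of \LEM{step}, solved backwards starting from the minimal pair at index $l$. The coupling is triangular: by \LEM{step}.\Mrefmclaim{c} the increments of $\gamma$ are the $n(i)$; by \LEM{step}.\Mrefmclaim{b} the increments of $\beta$ are the $\gamma(i)$; and by \LEM{step}.\Mrefmclaim{a} the increments of $\alpha$ are $2\beta(i)+\gamma(i)$. I would therefore establish \Mrefmclaim{c}, \Mrefmclaim{b}, \Mrefmclaim{a} in that order, telescoping each one-step relation over $i\in[0,l-1]$ and back-substituting the already-derived pointwise expression for the driving sequence.

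For \Mrefmclaim{c}, telescoping $\gamma(i+1)-\gamma(i)=n(i)$ gives $\gamma(l)-\gamma(0)=\Msum{i=0}{l-1}n(i)$, which is the claim; inverting it yields the pointwise form $\gamma(i)=\gamma(l)-\Msum{j=i}{l-1}n(j)$. For \Mrefmclaim{b}, telescoping $\beta(i+1)-\beta(i)=\gamma(i)$ gives $\beta(l)-\beta(0)=\Msum{i=0}{l-1}\gamma(i)$; substituting the pointwise $\gamma$ and interchanging the order of the double sum $\Msum{i=0}{l-1}\Msum{j=i}{l-1}n(j)$ — for fixed $j$ the index $i$ runs over $[0,j]$ — attaches the weight $(j+1)$ to $n(j)$ and, together with the $l\gamma(l)$ term, produces \Mrefmclaim{b}. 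Inverting gives the pointwise form $\beta(i)=\beta(l)-(l-i)\gamma(l)+\Msum{m=i}{l-1}(m-i+1)n(m)$.

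For \Mrefmclaim{a}, telescoping $\alpha(i+1)-\alpha(i)=2\beta(i)+\gamma(i)$ gives $\alpha(l)-\alpha(0)=2\Msum{i=0}{l-1}\beta(i)+\Msum{i=0}{l-1}\gamma(i)$. Substituting the two pointwise forms and again interchanging summation order, the $\gamma$-sum equals $l\gamma(l)-\Msum{i=0}{l-1}(i+1)n(i)$, while the $\beta$-sum works out to $\Msum{i=0}{l-1}\beta(i)=l\beta(l)-\tfrac{l(l+1)}{2}\gamma(l)+\Msum{m=0}{l-1}\tfrac{(m+1)(m+2)}{2}n(m)$, where for fixed $m$ the triangular index set forces $\Msum{i=0}{m}(m-i+1)=\tfrac{(m+1)(m+2)}{2}$. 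Inserting both into $\alpha(0)=\alpha(l)-2\Msum{i=0}{l-1}\beta(i)-\Msum{i=0}{l-1}\gamma(i)$ and collecting, the $\gamma(l)$-coefficient simplifies to $l(l+1)-l=l^2$, the $\beta(l)$-coefficient to $-2l$, and the coefficient of $n(i)$ to $(i+1)-(i+1)(i+2)=-(i+1)^2$, which is precisely \Mrefmclaim{a}.

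The telescoping is routine; the main obstacle is the bookkeeping of the two nested sums in \Mrefmclaim{a}, where the two separate contributions to each $n(i)$ — the $+(i+1)$ from the $\gamma$-channel and the $-(i+1)(i+2)$ from the $\beta$-channel — must combine to the clean weight $-(i+1)^2$. An alternative that avoids the double sums is induction on the chain length $l$, applying the hypothesis to the tail chain $(\MsX_1,D_1)\to\cdots\to(\MsX_l,D_l)$ and prepending the remaining adjoint relation via \LEM{step}; there the delicacy is instead the reindexing that shifts the polynomial weights $(i+1)$ and $(i+1)^2$ correctly.
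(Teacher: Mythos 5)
Your proof is correct, but it runs along a genuinely different track from the paper's. You treat the three identities as the solution of the coupled scalar recurrences of \LEM{step}, obtained by telescoping $\gamma(i+1)-\gamma(i)=n(i)$, $\beta(i+1)-\beta(i)=\gamma(i)$, $\alpha(i+1)-\alpha(i)=2\beta(i)+\gamma(i)$ and interchanging the resulting double sums; I checked the bookkeeping (the weights $\tfrac{(m+1)(m+2)}{2}$ from the triangular sum, the cancellation $l(l+1)-l=l^2$, and the combination $(i+1)-(i+1)(i+2)=-(i+1)^2$) and it all comes out right. The paper instead works one level up, at the level of divisor classes: writing $R_i$ for the sum of curves contracted by $\mu_i$ and iterating the pullback formulas, it derives $D_0=D_l-lK_l-\Msum{i=0}{l-1}(i+1)R_i$ and $K_0=K_l+\Msum{i=0}{l-1}R_i$, and then reads off all three identities at once by expanding $D_0^2$, $D_0K_0$, $K_0^2$ using $R_i^2=-n(i)$ and $R_iR_j=D_lR_i=K_lR_i=0$. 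Your route is more elementary in that it needs nothing beyond the already-established numerical recurrences and is pure summation algebra; the paper's route costs a little more geometry up front but yields the explicit divisor-class decomposition of $D_0$ as a by-product, which the paper reuses later when constructing linear systems $|D_0|=|dH-\sum_j m_jE_j|$ from the output of the algorithm. Either argument is acceptable as a proof of the lemma as stated.
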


\begin{proof}
Let $R_i$ be sum of exceptional curves that are contracted by $\mu_i$ for $0 \leq i < \ell$.
From the pullback formula for the canonical class in \SEC{theory}
and from \LEM{D}.\Mrefmclaim{a} it follows that
$K_{i-1}=\mu_{i-1}^*K_i+R_{i-1}$
and
$D_{i-1}=\mu_{i-1}^*D_i-K_{i-1}$.
By abuse of notation we will denote $\mu_{i-1}^*D_i$ as $D_i$ and $\mu_{i-1}^*K_i$ as $K_i$.

It follows that $D_{\ell-1} = D_\ell - K_\ell - R_{\ell-1}$.
In the next iteration we obtain
$
D_{\ell-2}
=
D_{\ell-1} - K_{\ell-1} - R_{\ell-2}
=
(D_\ell - K_\ell - R_{\ell-1}) - (K_\ell+R_{\ell-1}) - (R_{\ell-2})
=
D_\ell -2K_\ell -2R_{\ell-1} - R_{\ell-1}
$.
Repeating this we obtain
\[
D_0 = D_\ell - \ell K_\ell - \Msum{i=0}{\ell-1} (i+1) R_i.
\]
Similary we find
\[
K_0 = K_\ell + \Msum{i=0}{\ell-1} R_i.
\]
From \LEM{step}.\Mrefmclaim{c}, it follows that $R_i^2=-n(i)$.
From the projection formula in \SEC{theory}
it follows that $R_i\cdot R_j=D_\ell \cdot R_i=K_\ell \cdot R_i=0$ for $0\leq i,j \leq\ell$.
\end{proof}

The following algorithm outputs\Md
for a given level, invariants of minimal ruled pair
and the degree of the first ruled pair\Md
invariants that follow the combinatorics of an adjoint chain.
Moreover, 
the input level
is as close as possible to 
the upper bound of \THM{bound} in terms of the invariants.
The adjoint chain invariants in \EXM{bound} were constructed
with this algorithm and proof that the upper bounds
of \THM{bound} are tight in a combinatorial sense.

\newpage
\begin{algorithm}
\textbf{(construct adjoint chain)}
\label{alg:construct}
\begin{itemize}[itemsep=1pt,topsep=0pt, leftmargin=0.5cm]

\item[]\textbf{input:}
Level $\ell$, $\alpha(\ell)$, $\beta(\ell)$, $\gamma(\ell)$ and $c\in\MbbZ_{\geq1}$.

\item[]\textbf{output:}
The number of contracted curves $n(i)$ for $0 \leq i \leq \ell-1$ \Mst
the difference between $\ell$ and the upper bound in \THM{bound} is minimal,
under the condition that $\alpha(0)=c$.
If the output is $\emptyset$, then
no such valid adjoint chain exists for given input.

\item[]\textbf{method:}
Below is the description of the algorithm in pseudo code
using python syntax (\verb+#+ is for commenting).
The values \verb+l,al,bl,gl,c,None+ denote $\ell,\alpha(\ell)$, $\beta(\ell)$, $\gamma(\ell)$, $c$, $\emptyset$ respectively.
The function \\
\verb+a0(l, al, bl, gl, n)+ computes $\alpha(0)$
with the formula of \LEM{formulas}.\Mrefmclaim{a}.

\begin{Verbatim}[baselinestretch=0.9]
def construct_adjoint_chain( l, al, bl, gl, c ):

    n = l * [0]  # n is a list of l zeros [0,...,0]
    while True:

        # compute the maximal index j<=l-1 such that 
        # a0(m)>=c, where m equals n with j-th index 
        # increased by one.
        j = -1
        m = copy( n )  # m is set equal to list n
        while a0( l, al, bl, gl, m ) >= c and j <= l - 2:
            j = j + 1
            m = copy( n )
            m[j] = m[j] + 1

        if a0( l, al, bl, gl, m ) < c:
            j = j - 1

        if j >= 0:
            n[j] = n[j] + 1
        elif a0( l, al, bl, gl, n ) >= c:
            return n
        else:
            return None
\end{Verbatim}
\Mend
\end{itemize}
\end{algorithm}

%
%
%

\begin{proposition}
\textbf{(algorithm)}
\\
The output specification of \ALG{construct} is correct.

\end{proposition}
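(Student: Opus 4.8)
The plan is to separate the combinatorial bookkeeping of the loop from the optimality claim. First I would translate the pseudocode into the language of \LEM{formulas}: setting $A_0:=\gamma(l)l^2-2\beta(l)l+\alpha(l)$, the routine \texttt{a0} evaluates $\alpha(0)=A_0-\sum_{i=0}^{l-1}(i+1)^2 n(i)$, so that incrementing $n(j)$ lowers $\alpha(0)$ by exactly $(j+1)^2$. With this I would establish the outer-loop invariant that at the top of every iteration the current list $n$ satisfies $\alpha(0)\geq c$: it holds initially exactly when $A_0\geq c$, and it is preserved because $n(j)$ is incremented only at an index with $\alpha(0)-(j+1)^2\geq c$. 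The first purely technical step is to verify that the inner \texttt{while} together with the post-loop decrement $j=j-1$ returns the largest index $j\in[0,l-1]$ for which the increment keeps $\alpha(0)\geq c$ (equivalently $(j+1)^2\leq\alpha(0)-c$), capped at $l-1$, and returns $j=-1$ when no such index exists; this is a finite case check on the guard and the adjustment.

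Next I would settle termination and the return values. Each completed outer iteration strictly decreases $\alpha(0)$ by a positive integer while the invariant keeps $\alpha(0)\geq c$, so only finitely many increments occur. When the inner search yields $j<0$, the unavailability of index $0$ forces $\alpha(0)\leq c$, which with the invariant gives $\alpha(0)=c$ exactly, and the algorithm returns $n$. The only way to reach the \texttt{None} branch is $A_0<c$ at the very start, in which case no assignment of the $n(i)\geq 0$ can raise $\alpha(0)$ to $c$; conversely, if $A_0\geq c$ then $\alpha(0)=c$ is attainable (the list with $n(0)=A_0-c$ is a witness, which under the standing hypotheses one checks is a genuine adjoint chain), so \texttt{None} is output precisely when no valid chain with $\alpha(0)=c$ exists. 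This already yields feasibility, termination, and the correct dichotomy.

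The substantive part is the optimality assertion, that the returned $n$ minimizes $\tilde l-l$. By \LEM{formulas}.\Mrefmclaim{b} we have $\beta(0)=\beta(l)-\gamma(l)l+\sum_{i=0}^{l-1}(i+1)n(i)$, and since $\alpha(0)=c$ and the arithmetic genus $p$ (hence $\gamma(l)$) are fixed, the bound $\tilde l$ of \THM{bound} becomes a function of $\beta(0)$ alone, monotone in the distance of $\beta(0)$ from its optimum $\beta(0)=-1$. Minimizing $\tilde l-l$ therefore reduces to steering $\sum(i+1)n(i)$ to the value that places $\beta(0)$ at the extremal configuration used in the proof of \THM{bound} — for $-1\leq p\leq 0$ the composition of \LEM{S1} with \LEM{S3}, and the analogue via \LEM{S1} and \PRP{neron} for $p\leq -2$. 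I would then argue that selecting contractions at the latest admissible index, which is exactly the greedy rule, constructs the chain that matches this extremal adjoint-state pattern as closely as the constraint $\sum(i+1)^2 n(i)=A_0-c$ permits, so that the quantity fed into \THM{bound} is driven as near its optimum as the data allow.

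The hard part will be this optimality claim at the level of \emph{global}, not merely local, minimization. One must show that no redistribution of the contractions preserving $\alpha(0)=c$ can bring $\beta(0)$, and hence $\tilde l$, nearer its optimum than the greedy output does. This is delicate because of the integrality of $\sum(i+1)^2 n(i)=A_0-c$: a bare ``largest square first'' selection is not optimal for arbitrary targets, so the argument cannot treat the packing problem in isolation and must instead exploit the admissibility constraints on the sequence (the ordering of states in \LEM{state} and the endpoint restrictions of \PRP{neron}), together with the fact that genuine inputs force $h(0)$ to be large. I expect the cleanest route is an exchange argument comparing a putative optimal feasible $n^{\ast}$ with the greedy $n$ at the highest index where they disagree and trading contractions between indices while invoking the monotonicity of $\tilde l$; the main risk, which the proof must confront head-on, is that this trade interacts subtly with the quadratic constraint, so the structural restrictions on admissible chains are essential to rule out the counterexamples that exist for the unconstrained problem.
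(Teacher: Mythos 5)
Your handling of the routine parts is correct and in fact more careful than the paper's own: the loop invariant $\alpha(0)\geq c$, the verification that the inner search returns the largest index $j$ with $(j+1)^2\leq \alpha(0)-c$, termination, and the dichotomy between returning $n$ (exactly when $\alpha(0)=c$) and returning $\emptyset$ (exactly when the zero list already has $\alpha(0)<c$) all check out against the pseudocode. Your reduction of the objective via \LEM{formulas} is also the paper's route: with $\alpha(0)=c$ and $p$ fixed, $\tilde l$ depends only on $\beta(0)=\beta(l)-\gamma(l)l+\sum_i(i+1)n(i)$, so everything turns on which values of $\Theta:=\sum_i(i+1)n(i)$ and $\Gamma:=\sum_i n(i)$ the greedy rule realizes subject to the fixed budget $\Lambda:=\sum_i(i+1)^2n(i)=A_0-c$.

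The gap is that the heart of the proposition \Md that the greedy output minimizes $\tilde l-l$ \Md is precisely the step you announce but never carry out. You correctly observe that ``largest square first'' is not optimal for the unconstrained packing problem, propose an exchange argument at the highest index of disagreement, and warn that it interacts with the quadratic constraint; none of this is then executed, so the proposal proves feasibility and termination but not correctness of the output. The paper closes this hole differently: its Claim 1 inspects which extremal configurations attain the bounds in the case lemmas of \SEC{bound2} and concludes that minimizing $\tilde l-l$ amounts to minimizing $\gamma(l)-\gamma(0)$ (i.e.\ $\Gamma$) while controlling $\beta(0)$ (i.e.\ $\Theta$); it then argues from the coefficient comparison $(i+1)^2$ versus $(i+1)$ that incrementing at the largest admissible index exhausts the budget $\Lambda$ in the fewest steps, pinning down $\Gamma$ and $\Theta$ simultaneously. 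One may debate how airtight that coefficient argument is, but it is the substance of the proof, and your proposal stops where it starts. A secondary issue: your stated target of steering $\beta(0)$ toward $-1$ is not the right objective \Md a valid chain of prescribed length $l$ forces $\tilde l\geq l$, hence constrains $\beta(0)$ away from $-1$ in a way depending on $l$ and $c$ \Md so without the case analysis of \THM{bound} feeding into Claim 1, your exchange argument would be optimizing toward the wrong extremal configuration.
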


\begin{proof}
Note that the output and input
of \ALG{construct} uniquely defines a sequence
of invariants conform the rules of \LEM{step}, \LEM{state} and \LEM{dim}:
\[
(n(i),\gamma(i),\beta(i),h(i),\alpha(i))_{0\leq i \leq \ell-1}.
\]
In particular all the tables of \EXM{bound} are constructed with \ALG{construct}.
We denote by $\tilde{\ell}$ the upper bound of \THM{bound}
which depends on $\alpha(0)$, $\beta(0)$ and $p$.
From \PRP{neron} we see that if $\gamma(\ell)>0$, then $p=0$ and otherwise $\gamma(\ell)=8(p+1)$.

From \LEM{formulas}.\Mrefmclaim{a} it is immediate that the
algorithm terminates.
We prove that the algorithm outputs $n=(n(i))_i$, \Mst $\tilde{\ell}-\ell$
is minimal under the condition that $\alpha(0)=c$.

\Mclaim{1}
In order to minimize $\tilde{\ell}-\ell$
we need to minimize $\gamma(\ell)-\gamma(0)$ and maximize $\beta(0)$.

For the upper bound for $\ell$ as asserted in \LEM{S4} ($S_4(0)$)
we assumed that $\gamma(0)=\ldots=\gamma(\ell)=1$ and $\beta(0)>0$.
For the upper bound for $\ell$ as asserted in \LEM{S3} ($S_3(0)$)
we assumed that $\gamma(0)=\ldots=\gamma(\ell-1)=0$ and $\beta(0)=\ldots=\beta(\ell)=-1$.
For the upper bound for $\ell$ as asserted in
\LEM{S2a} ($S_2(0)$ and $p\geq -1$)
we assumed that $\gamma(0)=\ldots=\gamma(s-1)=-1$, $\gamma(s)=\ldots=\gamma(\ell-1)=0$
and $\beta(s)=\ldots=\beta(\ell)=-1$ \Mst $S_2(s-1)$ and $S_3(s)$.
For the upper bound for $\ell$ as asserted in
\LEM{S2b} ($S_2(0)$ and $p\leq -2$)
we assumed that $\gamma(0)=\ldots=\gamma(\ell)$.
It follows\Md under the constraints of \LEM{state}\Md
that in order to minimize $\tilde{\ell}-\ell$ we want to minimize
$\gamma(\ell)-\gamma(0)$ and maximize $\beta(0)$.
This completes the proof of \Mrefclaim{1}.

From \LEM{formulas}.\Mrefmclaim{c} we find that
we minimize $\gamma(\ell)-\gamma(0)$ if we minimize:
\[
\Gamma:=\Msum{i=0}{\ell-1}n(i).
\]

From \LEM{formulas}.\Mrefmclaim{b} we find that
we maximize $\beta(0)$ if we maximize:
\[
\Theta:=\Msum{i=0}{\ell-1}(i+1)n(i).
\]

From \LEM{formulas}.\Mrefmclaim{a} we find that
$\alpha(0) = C - \Lambda$ where
\[
\Lambda := \Msum{i=0}{\ell-1}(i+1)^2n(i),
\]
and $C$ is a constant which depends on the input.
If $C<c$, then the algorithm returns $\emptyset$.
Otherwise, we ensure that $\alpha(0)=c$
with the $n(0)$ term in $\Lambda$.

At each step of the while-loop the algorithm increases $n(i)$
with one, for as large possible $i$, under the constraint that $\alpha(0)\geq c$.
This way $\Theta$ is maximized since the coefficient of $n(i)$ is $i+1$.
The term $\Lambda$ is maximized even more since the coefficient of $n(i)$ equals $(i+1)^2$.
Therefore the condition $\alpha(0)=c$ is met in a minimal number of steps.
Thus $\Gamma$ is minimized under the constraint that $\alpha(0)\geq c$.
Now it follows from \Mrefclaim{1} that 
the output specification of \ALG{construct} is correct.
\end{proof}

\subsection{Geometric meaning of the constant c}
\label{sec:c}

Suppose that the adjoint chain of $(X_0,D_0)$
has invariants conform input and output of \ALG{construct}.
From \PRP{neron} it follows that the arithmetic genus of $X_0$ equals
$p=\min(0, \Mceil{\frac{1}{8}\gamma(\ell)-1})$.

The input constant $c$ equals the
degree of the polarized model of $(X_0,D_0)$.

We will now argue that the constant $c$ also measures the embedding dimension
of the polarized model of $(X_0,D_0)$.
Recall that the embedding dimension of the polarized model of $(X_0,D_0)$ equals $h^0(D_0)-1$.
If $D_0$ is ample, then it follows from Kodaira vanishing theorem and Riemann Roch theorem that
\[
h^0(D_0)= \frac{h(0)}{2}+p+1.
\]
By increasing the input constant $c$ we increase $h(0)$ and consequently $h^0(D_0)$.
If $D_0-K_0$ is only nef and big, then alternatively we can use the
Kawamata-Viehweg vanishing theorem.

Recall that by definition of nef and big only a high enough multiple of $D_1$
defines a birational morphism.
Reider's theorem \citep[Theorem~11.4]{bar1} says that if $c=D_0^2\geq 10$ and there exists no curve $C$ such that
($D_0\cdot C=0$ and $C^2=-1$) or ($D_0\cdot C=1$ and $C^2=0$) or
($D_0\cdot C=2$ and $C^2=0$), then $D_0+K_0$ defines a birational morphism.
Notice that $D_1$ is the pushforward of $D_0+K_0$ by definition
and thus if  $D_0+K_0$ defines a birational morphism, then 
the polarized model of $(X_1,D_1)$ is a surface.

\subsection{Computing examples from output of algorithm}

Let input $\ell$, $\alpha(\ell)$, $\beta(\ell)$, $\gamma(\ell)$, $c$ and
output $(n(i))_{0\leq i \leq \ell-1}$ of \ALG{construct} be given.
The output of
\ALG{construct} is not necessarily \Mdef{geometric}
in the sense that
$(X_0,D_0)$ exists \Mst
the polarized model of $(X_0,D_0)$ is a surface.
In particular, $\alpha(\ell)$, $\beta(\ell)$, $\gamma(\ell)$ has to be conform \PRP{neron}.
However, if the output is geometric, then we
can compute\Md at least in theory\Md equations for a polarized model of $(X_0,D_0)$
\Mst its adjoint chain has the corresponding invariants.

For the sake of simplicity we assume that $X_\ell$
is the blowup of the projective plane with $D_\ell=-K_\ell$ as in \PRP{neron}.
We blow up $X_\ell$ in $n(\ell-1)$ generic points.
It follows from \LEM{D}.\Mrefmclaim{a} and
the pullback formula for the canonical class in \SEC{theory}
that
\[
D_{\ell-1}=\mu^*D_\ell-\mu^*K_\ell+\Msum{i=1}{n(\ell-1)}E_i',
\]
where $E_i'$ are disjoint exceptional curves.
Similar as in the proof of \LEM{formulas}, we find after
sequentially taking the pullback as above that
\[
D_0=dH-\Msum{j}{}m_jE_j,
\]
where $H$ is the pullback of lines in the projective plane, $m_j>0$
and the $E_j$ are the pullback of exceptional curves.
Note that $D_0^2=c$ by assumption.

We construct a linear series $|D_0|$ in the plane with polynomials of degree $d$ and
generic base points with multiplicities $(m_i)_i$.
We check whether the map associated to the linear series parametrizes a surface,
otherwise we have to consider a multiple of $D_0$ (see \SEC{c}).
After a generic projection we may assume that we have a
parametrization of a hypersurface in 3-space.
We consider an implicit equation of degree $\alpha(0)$ with undetermined coefficients
and substitute the parametrization.
We obtain an implicit equation by solving the linear system of
equations in the undetermined coefficients.

See \citep[Example~52]{nls1} for worked out equations
for a surface of degree 8 with a minimal family of degree 8.
As illustrated in Table 1 of \EXM{bound},
a surface of degree 8
has minimal family degree of at most 18.

\section{Inequality for lattice polygons}

Let $(X_0,D_0)$ be a toric surface with polarized model $Y_0\subset \MbbP^n$.
We define the lattice polygon $P_0$ by taking the convex hull of the lattice points in the lattice $\MbbZ^2\subset\MbbR^2$
with coordinates defined by the exponents of a monomial parametrization $\Marrow{(\MbbC^*)^2}{}{Y_0}$.

We denote $\rho(0)$ for the Picard number of $X_0$.
We define $S(0)$ to be the number of exceptional divisors in the minimal resolution of
the isolated singularities of $Y_0$. We introduce the following notation:
\[
v(0):=\rho(0)+2-S(0).
\]
The \MdefAttr{adjoint}{lattice polygon} of a lattice polygon is
defined as the convex hull of its interior lattice points.
We call a lattice polygon \MdefAttr{minimal}{lattice polygon}
if its adjoint is either the empty set, a point or a line segment.
The \MdefAttr{level}{lattice polygon} $\ell(P_0)$ of a lattice polygon is defined
as the number of subsequent adjoint lattice polygons
$\Marrow{P_0}{}{}\ldots\Marrow{}{}{P_{\ell(P_0)}}$
until a minimal lattice polygon $P_{\ell(P_0)}$ is obtained.
See \RMK{levelkeel} concerning the alternative definition
for level as in \citep[Section~2.3]{sch3} and \citep[Section~3]{sch4}.

We recall part of the dictionary in \citep[Section~1]{sch5}
using the notation at the beginning of \SEC{bound}:
\begin{itemize}[topsep=0pt]
\item $\frac{\alpha(0)}{2}=a(P_0)$ where $a(P_0)$ is the area of~$P_0$,
\item $-\beta(0)=b(P_0)$ where $b(P_0)$ is the number of boundary lattice points, 
\item $v(0)=v(P_0)$ where $v(P_0)$ is the number of vertices of~$P_0$, and
\item $\ell=\ell(P_0)$ where $\ell(P_0)$ is level of~$P_0$.
\end{itemize}

From \LEM{formulas} it follows that
\begin{equation}
\label{eqn:1}
\alpha(0)+2\ell\beta(0)=-\gamma(\ell)\ell^2+\alpha(\ell) + \Phi,
\end{equation}
where
\[
\Phi:=\Msum{i=0}{\ell-1}(2\ell-i-1)(i+1) n(i).
\]
As an immediate consequence we obtain the following inequality
\begin{equation}
\label{eqn:2}
\alpha(0)+2\ell\beta(0)+\gamma(\ell)\ell^2\geq 0.
\end{equation}
From \PRP{neron} it follows that $\gamma(\ell)\leq 9$
and by substituting $9$ for $\gamma(\ell)$ in \EQN{2}
we recover the inequality of \citep[Theorem~5]{sch5}.
Moreover, we see that the inequality holds more generally for birationally ruled surfaces.
Note that for irrational birationally ruled surfaces we have that $\gamma(\ell)\leq 0$.

We want improve \EQN{2} by bounding $\Phi$ in terms of $v(0)$.
From \PRP{neron} it follows that $\rho(\ell)\leq 9$.
We recall from \SEC{theory} that the Picard number decreases by 1
for each contracted exceptional curve, and thus
\[
\Msum{i=0}{\ell-1}n(i)\geq \rho(0)-9 \geq v(0)-11.
\]
From $(2\ell-i-1)(i+1)\geq 2\ell-1$ for all $0 \leq i \leq \ell-1$
it follows that
\begin{equation}
\label{eqn:3}
\Phi\geq  (2\ell-1)\Msum{i=0}{\ell-1}n(i) \geq (2\ell-1)(v(0)-11).
\end{equation}
Now from \EQN{1}, \EQN{3} and $\alpha(\ell)\geq 0$
we obtain the following inequality on invariants of
birationally ruled surfaces:

\begin{theorem}
\[
\alpha(0)+2\ell\beta(0)+9\ell^2 \geq (2\ell-1)(v(0)-11).
\]
\end{theorem}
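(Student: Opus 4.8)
The plan is to obtain the theorem as a direct consequence of the identity \EQN{1}, the estimate \EQN{3}, and the two sign facts $\alpha(l)\geq 0$ and $\gamma(l)\leq 9$, all of which are already established. Since the substantive estimation has been carried out beforehand, what remains is a short rearrangement.

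First I would isolate the left-hand side by adding $9l^2$ to both sides of \EQN{1}, which yields
\[
\alpha(0)+2l\beta(0)+9l^2=(9-\gamma(l))l^2+\alpha(l)+\Phi.
\]
I would then bound each of the three summands on the right from below. The term $(9-\gamma(l))l^2$ is nonnegative since $\gamma(l)\leq 9$ by \PRP{neron}; the term $\alpha(l)=D_l^2$ is nonnegative since $D_l$ is nef; and $\Phi\geq(2l-1)(v(0)-11)$ by \EQN{3}. Adding these three lower bounds gives precisely $\alpha(0)+2l\beta(0)+9l^2\geq(2l-1)(v(0)-11)$, which is the claim.

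The genuine content sits in \EQN{3} rather than in this assembly. Its first ingredient is the pointwise estimate $(2l-i-1)(i+1)\geq 2l-1$ for $i\in[0,l-1]$, which holds because the left side is a downward parabola in $i$ whose minimum over the interval is attained at the endpoint $i=0$. The step I expect to carry the weight is the Picard-number count $\Msum{i=0}{l-1}n(i)\geq v(0)-11$: here I would combine $\rho(l)\leq 9$ from \PRP{neron} with the drop of the Picard number by one per contracted exceptional curve (\SEC{theory}) to get $\Msum{i=0}{l-1}n(i)=\rho(0)-\rho(l)\geq\rho(0)-9$, and then invoke the dictionary relation $v(0)=\rho(0)+2-S(0)$ with $S(0)\geq 0$ to conclude $\rho(0)\geq v(0)-2$. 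One subtlety I would flag is that multiplying $\Msum{i=0}{l-1}n(i)\geq v(0)-11$ through by $2l-1$ preserves the inequality only for $l\geq 1$, so I would take $l\geq 1$ as the standing hypothesis; the degenerate case $l=0$, where $\Phi=0$ and $(\MsX_0,D_0)$ is already minimal, is trivial.
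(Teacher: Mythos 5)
Your argument is correct for $l\geq 1$ and is essentially the paper's own proof: rewrite \EQN{1} as $\alpha(0)+2l\beta(0)+9l^2=(9-\gamma(l))l^2+\alpha(l)+\Phi$ and bound the three terms below by $0$, $0$ and $(2l-1)(v(0)-11)$ respectively, the last via the Picard-number count and the pointwise estimate on $(2l-i-1)(i+1)$. The one point where you go beyond the paper is also the one misstep: the case $l=0$ is not trivial but actually fails (e.g.\ a minimal geometrically ruled pair with $\alpha(0)=0$ and $v(0)\leq 4$ would need $0\geq 11-v(0)$; in the polygon corollary the unit triangle gives $1\geq 8$), so $l\geq 1$ is a genuine hypothesis that the paper leaves implicit rather than a degenerate case one can dispatch.
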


Restricting to toric surfaces and applying the dictionary
we obtain an improved inequality for lattice polygons, as
was conjectured in \citep[Section~4]{sch5},     \citep[Section~2.7]{cst1}:

\begin{corollary}
\[
2a(P_0)-2\ell(P_0)b(P_0)+9\ell(P_0)^2 \geq (2\ell(P_0)-1)(v(P_0)-11).
\]
\end{corollary}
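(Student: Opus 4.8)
The plan is to obtain the Corollary as the toric specialization of the Theorem, so the only genuine work is to translate the invariants $\alpha(0),\beta(0),v(0),l$ of the ruled pair $(\MsX_0,D_0)$ into the invariants $a(P_0),b(P_0),v(P_0),l(P_0)$ of the associated lattice polygon $P_0$. First I would invoke the dictionary recorded just above the Theorem, namely $\alpha(0)=2a(P_0)$, $-\beta(0)=b(P_0)$, $v(0)=v(P_0)$ and $l=l(P_0)$, and then substitute these four identities directly into the inequality of the Theorem.

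Carrying out the substitution, the left-hand side $\alpha(0)+2l\beta(0)+9l^2$ becomes $2a(P_0)+2l(P_0)\bigl(-b(P_0)\bigr)+9l(P_0)^2=2a(P_0)-2l(P_0)b(P_0)+9l(P_0)^2$, while the right-hand side $(2l-1)(v(0)-11)$ becomes $(2l(P_0)-1)(v(P_0)-11)$. This is verbatim the asserted inequality, so once the dictionary is in place no further estimate is required; in particular the hard analytic content already resides in \EQN{1}, \EQN{3} and the bound $\alpha(l)\geq0$ used to prove the Theorem.

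The substantive point, and the step I expect to be the main obstacle, is justifying the dictionary, in particular the identity $l=l(P_0)$ relating the adjoint-chain level of $(\MsX_0,D_0)$ to the combinatorial level of $P_0$ defined by iterating the convex-hull-of-interior-points operation. The key is the toric interpretation of adjunction: for a polarized toric surface the global sections $H^0(\McalO(D+K))$ are indexed by the lattice points strictly interior to $P$, so the moment polygon of the adjoint divisor $D+K$ is precisely the adjoint polygon $\mathrm{conv}\bigl(\mathrm{int}(P)\cap\MasZ^2\bigr)$. Consequently each adjoint relation $\Marrow{(\MsX_i,D_i)}{\mu_i}{(\MsX_{i+1},D_{i+1})}$ corresponds to one passage to the adjoint polygon, and the two notions of level agree; the remaining dictionary entries follow from the standard toric formulas $D_0^2=2a(P_0)$ (twice the Euclidean area) and $-D_0K_0=b(P_0)$ (sum of the lattice lengths of the edges, i.e.\ the number of boundary lattice points), together with the definition $v(0)=\rho(0)+2-S(0)$ counting vertices. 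Since all of these correspondences are supplied by the cited dictionary of \cite{sch5}, the proof of the Corollary reduces to the one-line substitution above.
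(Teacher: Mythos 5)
Your proposal is correct and matches the paper's own (one-line) proof: the Corollary is obtained by restricting to toric surfaces and substituting the dictionary $\alpha(0)=2a(P_0)$, $-\beta(0)=b(P_0)$, $v(0)=v(P_0)$, $l=l(P_0)$ into the Theorem, exactly as you do. Your additional paragraph justifying the dictionary via toric adjunction is a sensible elaboration of what the paper simply delegates to the cited reference \cite{sch5}, not a different route.
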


\section{Acknowledgements}

I would like to thank Josef Schicho for useful discussions.
This work was supported by base funding of the
King Abdullah University of Science and Technology.

\bibliography{geometry}

\paragraph{address of author:}
Johann Radon Institute for Computational and Applied 
Mathematics (RICAM), Austrian Academy of Sciences
\\
\textbf{email:} niels.lubbes@gmail.com


\end{document}